\newtheorem{theorem}{Теорема}
\newtheorem{lemma}{Лемма}
\newtheorem{proposition}{Предложение}
\title{Теоремы о свободе для свободных лиевых сумм алгебр Ли}
\author{А.\,Ф.\,Красников}
\address{Омский государственный университет, г.\,Омск}
\email{phomsk@mail.ru}
\begin{document}

\maketitle

\section*{Введение}
\noindent Известная теорема о свободе Ширшова \cite{Sh} показывает, что если $F$ --- свободная алгебра Ли с множеством $R$ образующих
и одним определяющим соотношением $s=0$, в левую часть которого входит образующий $x$, то подалгебра, порожденная в алгебре $F$ множеством $R\setminus x$, свободна.\\
Подобное утверждение имеет место для $k\text{-ступенно}$ разрешимых и полинильпотентных алгебр Ли \cite{Tl_0}, \cite{Tl}.\\
Пусть $F$ --- свободная
сумма нетривиальных алгебр Ли $A_i~(i\in I)$. Эндоморфизм $\varphi$ алгебры $F$ будем называть элементарным, если найдется $K_\varphi\subseteq I$
такое, что если $a\in A_i$, то $\varphi(a)=0~(i\in K_\varphi)$, $\varphi(a)=a~(i\not\in K_\varphi)$.\\
В настоящей работе доказана теорема:
\begin{theorem}\label{tm2_algr}
Пусть $F$ --- свободная
сумма нетривиальных алгебр Ли $A_i$,
$N$ --- допускающий элементарные эндоморфизмы алгебры $F$ идеал алгебры $F$ такой, что $N\cap A_i=0$ $(i=1,\ldots,n;~n>2)$,
\begin{eqnarray}\label{end_algr_2}
N=N_{11} \geqslant \ldots \geqslant N_{1,m_1+1}=N_{21} \geqslant \ldots \geqslant N_{s,m_s+1},
\end{eqnarray}
где $N_{kl}$ --- l-я степень алгебры $N_{k1}$.\\
Пусть, далее, $r\in N_{1i}\backslash N_{1,i+1}\,(i\leqslant m_1)$, $R$ --- идеал, порожденный в алгебре $F$ элементом $r$, $H$ --- свободная
сумма алгебр $A_i~(i=1,\ldots,n-1)$.\\
Если (и только если) $r\not\in H+N_{1,i+1}$, то $H\cap (R+N_{kl})=H\cap N_{kl}$, где $N_{kl}$ --- произвольный член ряда {\rm (\ref{end_algr_2})}.
\end{theorem}
\noindent Теорема Харлампович \cite{Hm} показывает, что если алгебра Ли $G$ задана с помощью образующих элементов $x_1,\ldots,x_n$ и определяющих соотношений $r_1,\ldots,r_m$, где $n>m$, то среди
данных образующих существуют такие $n-m$ элементов $x_{i_1},\ldots,x_{i_{n-m}}$, которые свободно порождают свободную подалгебру в $G$.\\
Подобное утверждение имеет место для разрешимых алгебр Ли, т.е. для алгебр Ли, заданных с помощью образующих
элементов и определяющих соотношений в многообразиях разрешимых алгебр Ли ступени разрешимости не выше $k$ \cite{Hm}.\\
В настоящей работе доказана теорема:
\begin{theorem}\label{tm3_algr}
Пусть $F$ --- свободная
сумма нетривиальных алгебр Ли $A_i$,
$N$ --- допускающий элементарные эндоморфизмы алгебры $F$ идеал алгебры $F$ такой, что $N\cap A_i=0$ $(i=1,\ldots,n)$,
$F/N$ --- разрешимая алгебра,
\begin{eqnarray}\label{end_algr_3}
N=N_{11} \geqslant \ldots \geqslant N_{1,m_1+1}=N_{21} \geqslant \ldots \geqslant N_{s,m_s+1},
\end{eqnarray}
где $N_{kl}$ --- l-я степень алгебры $N_{k1}$.\\
Пусть, далее, $r_1,\ldots,r_m$ --- элементы из $N$ $(m<n)$, $R$ --- идеал, порожденный в алгебре $F$ элементами $r_1,\ldots,r_m$.\\
Тогда найдется $J \subseteq \{1,2,\ldots,n\}$, $|J|\geqslant n-m$,
такое, что для подалгебры $H$ алгебры $F$, порожденной алгебрами $A_i$ $(i\in J)$ и произвольного $N_{kl}$ из ряда {\rm (\ref{end_algr_3})} справедлива формула
$H\cap (R+N_{kl}) = H\cap N_{kl}$.
\end{theorem}

\section{Некоторые свойства производных Фокса}
\noindent Все алгебры будут рассматриваться над произвольным фиксированным полем $P$. Пусть $L$ --- алгебра Ли. Через $U(L)$
будем обозначать универсальную обертывающую алгебру алгебры $L$, через $L_{(k)}$ --- $k$-ю степень $L$.\\
Пусть $u$, $v\in L$. Алгебра $L$ вкладывается в $U(L)$ и $[u,v]=uv-vu$ в $U(L)$ (мы обозначаем через $[u,v]$
умножение в $L$). Если $M$ --- идеал в
$L$, то через $M_U$ будем обозначать идеал, порожденный $M$ в $U(L)$. Если $M$ --- идеал алгебры $L$, порожденный
множеством $X$, то будем писать $M=\mbox{ид}_L (X)$.\\
Пусть $A$, $B$ --- подмножества множества элементов алгебры $U(L)$. Через $AB$ будем обозначать множество сумм произведений вида $ab$, где $a$, $b$ пробегают соответственно элементы $A$, $B$.\\
Теорема Пуанкаре-Биркгофа-Витта показывает, что
если $u_1,\ldots,u_n,\ldots$ --- упорядоченный базис в $L$, то $1$ и одночлены вида $u_{i_1}\cdots u_{i_r}$, где $i_1\leqslant\ldots\leqslant i_r$, образуют базис в $U(L)$, который называется стандартным базисом в $U(L)$.\\
Пусть $F=(\underset{i\in I} \sum^{\ast} A_i)\ast G$ --- свободная
сумма нетривиальных алгебр Ли $A_i~(i\in I)$ и свободной алгебры Ли $G$ с множеством свободных порождающих
$\{g_j | j\in J\}$; $N$ --- идеал в $F$.
Отметим, что если $H$ ---  подалгебра алгебры $F$ такая, что $H\cap A_i=0~(i\in I)$,
то $H$ --- свободная алгебра Ли \cite{Sh2}.\\
Заметим сразу, что множество свободных порождающих алгебры
мы иногда будем еще называть ее базой.\\
Если $u\in U(F)$, то однозначно находятся элементы $D_k(u)\in U(F)$
такие, что
\begin{eqnarray*}
u = \underset{i\in I} \sum D_i(u) + \underset{j\in J} \sum g_j D_j(u),
\end{eqnarray*}
где $D_i(u)\in A_i U(F)$.
Следуя Умирбаеву \cite{Um}, назовем $D_k(u)$ $(k\in I\cup J)$ производными Фокса элемента $u$. Нетрудно видеть, что производные Фокса обладают
следующими свойствами:
\begin{gather}
D_k(\alpha u+\beta v)=\alpha D_k(u)+\beta D_k(v)~(k\in I\cup J);\notag\\
D_j(g_j)=1~(j\in J),~D_k(g_j)=0,~\mbox{если}~k\neq j;\notag\\
\mbox{если}~a_i\in A_i, ~\mbox{то}~ D_i(a_i)=a_i~(i\in I),~D_k(a_i)=0,~\mbox{если}~k\neq i;\notag\\
D_k([u,v])= D_k(u)v-D_k(v)u;~D_k([n,u])\equiv D_k(n)u\mod{N_U}~(k\in I\cup J);\notag
\end{gather}
где $u,~v\in F,~n\in N,~\alpha,~\beta\in P$.\\
Пусть $H$ ---  подалгебра алгебры $F$ такая, что $H\cap A_i=0~(i\in I)$, $f\in H$,
$\{h_k | k\in K\}$ --- база $H$, $\{\partial_k | k\in K\}$ --- соответствующие этой базе производные Фокса алгебры $H$.
Из $f=\sum_k h_k\partial_k(f)$ и $h_k=\sum_{i\in I} D_i(h_k)+\sum_{j\in J} g_jD_j(h_k)$ следует, что\\
$f=\sum_{i\in I} (\sum_k D_i(h_k)\partial_k(f))+\sum_{j\in J} g_j(\sum_k D_j(h_k)\partial_k(f))$.\\
Поэтому $D_l(f)=\sum_k D_l(h_k)\partial_k(f)$.

\begin{lemma}\label{alg1_lm1}
Пусть $F=(\underset{i\in I} \sum^{\ast} A_i)\ast G$ --- свободная
сумма нетривиальных алгебр Ли $A_i~(i\in I)$ и свободной алгебры Ли $G$ с базой
$\{g_j | j\in J\}$; $K \subseteq I\cup J$, $H$ --- свободная
сумма свободной алгебры Ли с базой $\{g_j | j\in K\}$
и алгебр $A_i~(i\in K)$; $u_k$ --- элементы из $U(H)~(k\in J\cap K)$,
$u_m$ --- элементы из $A_mU(H)~(m\in I\cap K)$, почти все равные нулю; $N$ --- идеал в $F$.
Если
\begin{eqnarray}\label{alg1_lm1_1}
\underset{m\in I\cap K} \sum u_m + \underset{k\in  J\cap K} \sum g_ku_k\equiv 0\mod{N_U},
\end{eqnarray}
то найдется $v\in H\cap N$ такой, что $D_k(v)\equiv u_k \mod{N_U}$ $(k\in K)$.
\end{lemma}
\begin{proof}
Пусть $\{a_i,~i\in J_1\}$ --- базис пространства $H\cap N$.
Дополним его элементами $\{b_j,~j\in J_2\}$ до базиса
$H$, потом базис $H$ дополним до базиса $H+N$ элементами $\{c_s,~s\in
J_3\}$ из $N$.  Базис $H+N$ дополним до базиса $F$ элементами $\{d_t,~t\in
J_4\}$. Положив $a_i<b_j<c_s<d_t$
получаем стандартный базис алгебры $U(F)$, состоящий из слов
\begin{eqnarray}\label{alg1_1}
a_{i_1}\ldots a_{i_\mu}b_{j_1}\ldots b_{j_\nu}c_{s_1}\ldots
c_{s_\eta}d_{t_1}\ldots d_{t_\theta},
\end{eqnarray}
где $i_1\leqslant\ldots\leqslant i_\mu,~j_1\leqslant\ldots\leqslant
j_\nu,~s_1\leqslant\ldots\leqslant s_\eta,~t_1\leqslant\ldots\leqslant
t_\theta,~\mu\geqslant 0,~\nu\geqslant 0,~\eta\geqslant 0,~\theta\geqslant 0$.
Обозначим $\underset{m\in I\cap K} \sum u_m + \underset{k\in  J\cap K} \sum g_ku_k$ через $u^\prime$.
 Из (\ref{alg1_lm1_1}) следует, что $u^\prime$ ---
линейная комбинация одночленов вида (\ref{alg1_1}),
для которых $\mu\geqslant 1$, $\eta=\theta=0$.
Следовательно, $u^\prime = \sum_{x\in X} n_xw_{x1}\ldots w_{xz_x}$, где $n_x\in H\cap N,~w_{pq}\in H,~z_x\geqslant 0$.
Полагаем $v = \sum_{x\in X} [\ldots[n_xw_{x1}]\ldots
w_{xz_x}]$. Тогда $v\in H\cap N$ и
\begin{eqnarray}\label{alg1_lm1_2}
D_m(v) \equiv \sum_{x\in X} D_m(n_x)w_{x1}\ldots w_{xz_x}\mod{N_U}~(m\in K).
\end{eqnarray}
Будем иметь
\begin{gather}
\underset{m\in I\cap P} \sum\sum_{x\in X} D_m(n_x)w_{x1}\ldots w_{xz_x} + \underset{k\in  J\cap P} \sum g_k\sum_{x\in X} D_k(n_x)w_{x1}\ldots w_{xz_x}=\notag\\
\sum_{x\in X}(\underset{m\in I\cap P} \sum D_m(n_x)+ \underset{k\in  J\cap P} \sum g_k D_k(n_x))w_{x1}\ldots w_{xz_x} =\notag\\
\sum_{x\in X}n_xw_{x1}\ldots w_{xz_x}=\underset{m\in I\cap P} \sum u_m + \underset{k\in  J\cap P} \sum g_ku_k.\notag
\end{gather}
Таким образом,
\begin{eqnarray}\label{alg1_lm1_3}
\sum_{x\in X} D_k(n_x)w_{x1}\ldots w_{xz_x}=u_k~(k\in K).
\end{eqnarray}
Из (\ref{alg1_lm1_2}), (\ref{alg1_lm1_3}) следует $D_k(v) \equiv u_k\mod{N_U}~(k\in K)$.
\end{proof}

\begin{lemma}\label{alg1_tm1}
Пусть $F=(\underset{i\in I} \sum^{\ast} A_i)\ast G$ --- свободная
сумма нетривиальных алгебр Ли $A_i~(i\in I)$ и свободной алгебры Ли $G$ с базой
$\{g_j | j\in J\}$; $K \subseteq I\cup J$, $H$ --- свободная
сумма свободной алгебры Ли с базой $\{g_j | j\in K\}$
и алгебр $A_i~(i\in K)$; $u_k$ --- элементы из $U(F)~(k\in J\cap K)$,
$u_m$ --- элементы из $A_mU(F)~(m\in I\cap K)$, почти все равные нулю; $N$ --- идеал в $F$.
Если
\begin{eqnarray}\label{alg1_tm1_1}
\underset{m\in I\cap K} \sum u_m + \underset{k\in  J\cap K} \sum g_ku_k\equiv 0\mod{N_U},
\end{eqnarray}
то найдется $v\in \mbox{\rm ид}_F (H\cap N)$ такой, что $D_k(v)\equiv u_k \mod{N_U}$  $(k\in K)$.
\end{lemma}
\begin{proof}
Выберем в алгебре $U(F)$ стандартный базис, состоящий из слов (\ref{alg1_1}). Найдутся попарно различные стандартные одночлены
$f_1,\ldots,f_z$, для которых $\mu=\nu=\eta=0$, такие, что
\begin{gather}
u_k \equiv \sum_{l=1}^z u_{kl}f_l\mod{N_U}~(k\in J\cap K),\label{alg1_tm1_2}\\
u_m \equiv \sum_{l=1}^z u_{ml}f_l\mod{N_U}~(m\in I\cap K),\label{alg1_tm1_20}
\end{gather}
где $u_{kl}\in U(H)$; $u_{ml}\in A_mU(H)$.
Формулы (\ref{alg1_tm1_1}), (\ref{alg1_tm1_2}), (\ref{alg1_tm1_20}) показывают, что
\begin{eqnarray*}
\sum_{l=1}^z (\underset{m\in I\cap K} \sum  u_{ml} + \underset{k\in  J\cap K} \sum g_k u_{kl})f_l\equiv 0\mod{N_U},
\end{eqnarray*}
следовательно
\begin{eqnarray}\label{alg1_tm1_3}
\underset{m\in I\cap K} \sum  u_{ml} + \underset{k\in  J\cap K} \sum g_k u_{kl} \equiv 0 \mod{N_U}~(1\leqslant l\leqslant z).
\end{eqnarray}
Ввиду леммы \ref{alg1_lm1} из (\ref{alg1_tm1_3}) следует существование
элементов $v_1,\ldots, v_z$ из $H\cap N$ таких, что
\begin{eqnarray}\label{alg1_tm1_4}
D_k(v_l)\equiv u_{kl} \mod{N_U}~(k\in K).
\end{eqnarray}
Пусть $f_l=d_{l1}\ldots d_{lz_l}$. Полагаем $v =\sum_{l=1}^z [\ldots[v_ld_{l1}]\ldots d_{lz_l}]$.\\
Тогда
$v\in\mbox{ид}_F (H\cap N)$ и
\begin{eqnarray}\label{alg1_tm1_5}
D_k(v) \equiv \sum_{l=1}^z D_k(v_l)f_l\mod{N_U}~(k\in K).
\end{eqnarray}
Из (\ref{alg1_tm1_4}), (\ref{alg1_tm1_5}) следует
\begin{eqnarray}\label{alg1_tm1_6}
D_k(v) \equiv \sum_{l=1}^z u_{kl}f_l\mod{N_U}~(k\in K).
\end{eqnarray}
Из (\ref{alg1_tm1_2}), (\ref{alg1_tm1_20}), (\ref{alg1_tm1_6}) видно, что $D_k(v)\equiv u_k\mod{N_U}$ $(k\in K)$.
\end{proof}

\begin{lemma}\label{alg1_tm1_lm3}
Пусть $F=(\underset{i\in I} \sum^{\ast} A_i)\ast G$ --- свободная
сумма нетривиальных алгебр Ли $A_i~(i\in I)$ и свободной алгебры Ли $G$ с базой
$\{g_j | j\in J\}$; $N$ --- идеал в $F$, $M = \mbox{\rm ид}_F(N\cap A_i|i\in I)+[N,N]$, $v\in F$.\\
Если
$D_k(v)\equiv ~0\mod{N_U},~k\in I\cup J$, то $v\in M$.
\end{lemma}
\begin{proof}
Так как $v\in N_U$, то образ $v$ в $U(F/N)$=$U(F)/N_U$ равен $0$, откуда $v\in N$.
Пусть $u_1,\ldots,u_n,\ldots$ --- базис в $N/M$. Дополним его до базиса $F/M$
элементами  $v_1,\ldots,v_n,\ldots$. Положив $v_i<u_j$
получаем стандартный базис алгебры $U(F/M)$, состоящий из слов
\begin{eqnarray}\label{alg1_tm1_lm3_2}
v_{i_1}\ldots v_{i_\mu}u_{j_1}\ldots u_{j_\nu},
\end{eqnarray}
$i_1\leqslant\ldots\leqslant i_\mu,~j_1\leqslant\ldots\leqslant j_\nu$.
Предположим, $v\not\in M$. Тогда
\begin{eqnarray}\label{alg1_tm1_lm3_1}
v+M =\alpha_1 u_{j_1}+\ldots +\alpha_m u_{j_m},
\end{eqnarray}
$0\neq \alpha_k\in P$, $k=1,\ldots ,m$. Образы $D_k(v)~(k\in I\cup J)$ при естественном гомоморфизме
$U(F)\rightarrow U(F/M)$ обозначим через $w_k$.\\
Покажем, что $(g_k+M) w_k~(k\in J)$ --- линейные комбинации стандартных одночленов (\ref{alg1_tm1_lm3_2}) длины $\geqslant 2$.
Действительно, если $g_k\in N$, то в разложении $(g_k+M) w_k$ по базису появятся только стандартные базисные одночлены
(\ref{alg1_tm1_lm3_2}) у которых $\nu\geqslant 2$, а если $g_k\not\in N$,
то в разложении $(g_k+M) w_k$ по базису будут участвовать только стандартные базисные одночлены
(\ref{alg1_tm1_lm3_2}) у которых либо $\nu\geqslant 2$ либо $\mu+\nu\geqslant 2$.\\
Покажем, что $w_k~(k\in I)$ --- линейные комбинации стандартных одночленов (\ref{alg1_tm1_lm3_2}) длины $\geqslant 2$.
Дополним базис $u_1,\ldots,u_n,\ldots$ до базиса $(A_k+N)/M$
элементами  $a_1,\ldots,a_n,\ldots$ из $(A_k+M)/M$. Дополним базис $(A_k+N)/M$ до базиса $F/M$
элементами  $b_1,\ldots,b_n,\ldots$. Положив $a_i<b_s<u_j$
получаем стандартный базис алгебры $U(F/M)$, состоящий из слов
\begin{eqnarray}\label{alg1_tm1_lm3_3}
a_{i_1}\ldots a_{i_\delta}b_{s_1}\ldots b_{s_\eta}u_{j_1}\ldots u_{j_\gamma},
\end{eqnarray}
$i_1\leqslant\ldots\leqslant i_\delta,~s_1\leqslant\ldots\leqslant s_\eta,~j_1\leqslant\ldots\leqslant j_\gamma$.
Пусть $w\in \{w_k | k\in I\}$, $w = \alpha_1c_1d_1+\ldots+\alpha_lc_ld_l$, $\alpha_i\in P$, $c_i\in \{a_1,\ldots,a_n,\ldots\}$, $c_id_i$ --- попарно различные одночлены вида (\ref{alg1_tm1_lm3_3}).\\
Так как $D_k(v)\equiv ~0\mod{N_U}$ и $N\cap A_k\subseteq M$, то
$d_i$ --- одночлены вида (\ref{alg1_tm1_lm3_3}) у которых $\gamma > 0$, поэтому $c_id_i$ --- линейная комбинация стандартных одночленов (\ref{alg1_tm1_lm3_2}) длины $\geqslant 2$.\\
Мы показали, что в разложении $v+M$ по базису, состоящему из слов (\ref{alg1_tm1_lm3_2}), будут участвовать только стандартные базисные одночлены длины $\geqslant 2$, что противоречит равенству (\ref{alg1_tm1_lm3_1}).
Поэтому $v\in M$.
\end{proof}

\begin{theorem}\label{alg1_tm2}
Пусть $F=(\underset{i\in I} \sum^{\ast} A_i)\ast G$ --- свободная
сумма нетривиальных алгебр Ли $A_i~(i\in I)$ и свободной алгебры Ли $G$ с базой
$\{g_j | j\in J\}$; $N$ --- идеал в $F$, $M = \mbox{\rm ид}_F(N\cap A_i|i\in I)+[N,N]$;
$K \subseteq I\cup J$, $H$ --- свободная
сумма свободной алгебры Ли с базой $\{g_j | j\in K\}$
и алгебр $A_i~(i\in K)$, $v\in F$.
Тогда
\begin{eqnarray}\label{alg1_tm2_1}
D_k(v)\equiv ~0~ mod ~N_U,~k\not\in K,
\end{eqnarray}
если и только если $v\equiv v_0 + v_1\mod{M}$, $v_0\in H$, $v_1\in \mbox{\rm ид}_F (H\cap N)$.
\end{theorem}
\begin{proof}Ясно, что если $v\equiv v_0 + v_1\mod{M}$, $v_0\in H$, $v_1\in \mbox{\rm ид}_F (H\cap N)$, то
формула (\ref{alg1_tm2_1}) верна.
Докажем обратное утверждение теоремы.\\
Выберем в $U(F)$ стандартный базис, состоящий из слов (\ref{alg1_1}). Из (\ref{alg1_tm2_1}) следует, что $v$ --- линейная комбинация стандартных одночленов, для которых
$\mu+\nu+\eta = 1$, откуда $v\in H+N$.
Выберем $v_0\in H$ так, чтобы было $v-v_0\in N$. Тогда
\begin{eqnarray}\label{alg1_tm2_2}
\underset{m\in I\cap K} \sum D_m(v-v_0) + \underset{k\in  J\cap K} \sum g_kD_k(v-v_0)\equiv 0\mod{N_U}.
\end{eqnarray}
Ввиду леммы \ref{alg1_tm1} из (\ref{alg1_tm2_2}) следует существование $v_1\in\mbox{ид}_F (H\cap N)$
такого, что
\begin{eqnarray*}
D_k(v-v_0)\equiv D_k(v_1)\mod{N_U},~k\in K.
\end{eqnarray*}
Тогда
\begin{eqnarray*}
D_k(v-v_0 - v_1)\equiv ~0\mod{N_U},~k\in I\cup J,
\end{eqnarray*}
откуда ввиду леммы \ref{alg1_tm1_lm3} следует $v-v_0 - v_1\in M$.
\end{proof}

\noindent {\bf Следствие} \cite{Shm}.
{\sl Пусть $F$ --- свободная сумма нетривиальных алгебр Ли
$A_i~(i\in I)$ и свободной алгебры Ли с базой $\{g_j | j\in J\}$, $N$--- идеал алгебры $F$ такой, что $N\cap A_i=0~(i\in I)$. Тогда
$D_k(v)\equiv ~0\mod{N_U},~k\in I\cup J$, если и только если $v\in [N,N]$}.

\section{Теорема о свободе для свободных сумм алгебр Ли с одним определяющим соотношением}

\noindent Пусть $F$ --- свободная сумма нетривиальных алгебр Ли
$A_i$ $(i\in I)$, $T\subseteq I$, $H$ --- свободная
сумма алгебр $A_i~(i\in T)$, $N$ --- идеал алгебры $F$,
$a_1,\ldots ,a_r,\ldots$ --- базис пространства $H\cap N$.
Дополним его элементами $b_1,\ldots ,b_l,\ldots$ до базиса $N$.  Базис $N$ дополним до базиса $H+N$ элементами $e_1,\ldots ,e_k,\ldots$ из $H$. Базис $H+N$ дополним элементами
$d_1,\ldots ,d_k,\ldots $ до базиса $F$. Полагая $d_t<e_i<b_j<a_s$
получаем стандартный базис алгебры $U(F)$, состоящий из слов
\begin{eqnarray}\label{alg2_1}
d_{t_1}\ldots
d_{t_\theta}e_{i_1}\ldots
e_{i_\eta}b_{j_1}\ldots b_{j_\nu}a_{s_1}\ldots a_{s_\mu},
\end{eqnarray}
где $t_1\leqslant\ldots\leqslant t_\theta,~i_1\leqslant\ldots\leqslant i_\eta,
~j_1\leqslant\ldots\leqslant j_\nu,~s_1\leqslant\ldots\leqslant s_\mu$;
$\theta,~\eta,~\nu,~\mu\geqslant 0$.\\
Упорядочим стандартный базис алгебры $U(F)$ при помощи отношения $\leqslant$ так, что одночлены меньшей длины предшествуют
одночленам большей длины, а одночлены равной длины упорядочены лексикографически (слева направо).\\
Обозначим через $S_\beta$ --- множество стандартных одночленов, для которых $\theta> 0$ и $\nu = \mu = 0$,
через $S_\alpha$ --- множество стандартных одночленов, для которых $\theta = \nu =\mu = 0$.
Будем называть $S=S_\alpha\cup S_\beta$ системой представителей алгебры $U(F)$ по идеалу $N_U$.\\
Пусть $N=N_1 \geqslant \ldots \geqslant N_t \geqslant \ldots$ --- ряд
идеалов алгебры $F$, $[N_i,N_j\,]\leqslant N_{i+j}$, $\varphi$ --- естественный гомоморфизм $U(F)\rightarrow U(F/N_l)$, $h_1=\varphi(H\cap N)$, $h_i=h_1\cap \varphi(N_i)$.
Пусть $a_{l-1,1}\ldots ,a_{l-1,r},\ldots$ --- базис пространства $h_{l-1}$.
Дополним его элементами $b_{l-1,1},\ldots ,b_{l-1,r},\ldots$ до базиса $\varphi(N_{l-1})$.
Продолжая этот процесс применительно к идеалу $N_{l-2}$, мы построим в конце концов базис пространства $\varphi(N)$.
Если $s< t$ или $s= t$ и $\bar{s}< \bar{t}$, то полагаем $(s,\bar{s})< (t,\bar{t})$. Полагая $b_{j,\bar{j}}<a_{s,\bar{s}}$
получаем стандартный базис алгебры $U(N/N_l)$, состоящий из слов
\begin{eqnarray}\label{alg2_1_0}
b_{j_1,\overline{j_1}}\ldots b_{j_\nu,\overline{j_\nu}}a_{s_1,\overline{s_1}}\ldots a_{s_\mu,\overline{s_\mu}},
\end{eqnarray}
где $(j_1,\overline{j_1})\leqslant\ldots\leqslant (j_\nu,\overline{j_\nu})$, $(s_1,\overline{s_1})\leqslant\ldots\leqslant (s_\mu,\overline{s_\mu})$,
$\nu\geqslant 0$, $\mu\geqslant 0$.\\
Упорядочим стандартный базис алгебры $U(N/N_l)$ при помощи отношения $\leqslant$ так, что одночлены меньшей длины предшествуют
одночленам большей длины, а одночлены равной длины упорядочены лексикографически (слева направо).\\
Пусть $\delta_i$ --- идеал в $U(N/N_l)$, порожденный $\{\varphi(N_{i_1})\cdots \varphi(N_{i_s})|\,i_1 + \cdots + i_s\geqslant i\}$, $\delta_i^\prime$ --- идеал в $U(h_1)$, порожденный $\{h_{i_1}\cdots h_{i_s}|\,i_1 + \cdots + i_s\geqslant i\}$.\\
Так как элемент алгебры $U(h_1)\cap \delta_i$ равен линейной комбинации стандартных одночленов, у которых
$\nu = 0$ и $s_1 + \cdots + s_\mu\geqslant i$, то $U(h_1)\cap \delta_i= \delta_i^\prime$.\\
Если $u\in \delta_i\setminus \delta_{i+1}$, то
элемент $u$ по модулю $\delta_{i+1}$ равен линейной комбинации стандартных одночленов, у которых
$j_1 + \cdots + j_\nu + s_1 + \cdots + s_\mu= i$. Обозначим через $\alpha$ ненулевой коэффициент, с которым
входит в эту линейную комбинацию старший из стандартных одночленов.\\
Если $v\in \delta_j \setminus \delta_{j+1}$, то
элемент $v$ по модулю $\delta_{j+1}$ равен линейной комбинации стандартных одночленов, у которых
$j_1 + \cdots + j_\nu + s_1 + \cdots + s_\mu= j$. Обозначим через $\beta$ ненулевой коэффициент, с которым
входит в эту линейную комбинацию старший из стандартных одночленов.\\
Тогда $uv$ по модулю $\delta_{i+j+1}$ равен линейной комбинации стандартных одночленов, у которых
$j_1 + \cdots + j_\nu + s_1 + \cdots + s_\mu= i+j$. Так как
коэффициент, с которым входит в эту линейную комбинацию старший из стандартных одночленов, равен $\alpha\beta$, то
$uv\in \delta_{i+j}\setminus \delta_{i+j+1}$.\\
Покажем, что если $H_1=H\cap N$, $H_i=H\cap N_i$, $\Delta_i$ --- идеал, порожденный $N_{i_1}\cdots N_{i_t}$ в $U(N)$, $\Delta_i^\prime$ --- идеал, порожденный
$H_{i_1}\cdots H_{i_t}$ в $U(H_1)$, где $i_1 + \cdots + i_t\geqslant i$, то
\begin{eqnarray}\label{fm2_pr_gr}
\varphi(U(H_1))\cap \varphi(\Delta_i)=\varphi(\Delta_i^\prime).
\end{eqnarray}
Обозначим $\varphi(H_1)$ --- через $h_1$.
Полагаем $h_i=h_1\cap \varphi(N_i)$;
$\delta_i$, $\delta_i^\prime$ --- идеалы, порожденные
$\varphi(N_{i_1})\cdots \varphi(N_{i_t})$ в $U(N/N_l)$ и
$h_{i_1}\cdots h_{i_t}$ в $U(h_1)$ соответственно.\\
Ясно, что $\delta_i=\varphi(\Delta_i)$.\\
Если $i\geqslant l$, то $\varphi(N_i)=0$; если $i<l$, то $N_i \geqslant N_l$.\\
Поэтому $\varphi(H_1)\cap \varphi(N_i)=\varphi(H_1\cap N_i)$.
Следовательно, $\delta^\prime_i=\varphi(\Delta_i^\prime)$.\\
Выше было показано, что $U(h_1)\cap \delta_i=\delta_i^\prime$. Так как $U(h_1)=\varphi(U(H_1))$, то справедлива формула (\ref{fm2_pr_gr}).\\
Пусть $\alpha$, $\beta$ --- элементы алгебры $U(F)$. Ясно, что $\alpha\equiv \beta\mod{(N_l)_U}$
тогда и только тогда, когда при естественном гомоморфизме $\varphi : U(F)\rightarrow U(F/N_l)$
образы элементов $\alpha$, $\beta$ равны. Поэтому из (\ref{fm2_pr_gr}) следует, что
$U(H_1)\cap \Delta_i\equiv \Delta_i^\prime\mod{(N_l)_U}$.

\begin{lemma}\label{lm6_2_alg}
Пусть $F$ --- свободная сумма нетривиальных алгебр Ли
$A_i$ $(i\in I)$, $T\subseteq I$, $H$ --- свободная
сумма алгебр $A_i~(i\in T)$,
$N=N_1 \geqslant \ldots \geqslant N_t \geqslant \ldots $ --- ряд
идеалов алгебры $F$, $[N_i,N_j\,]\leqslant N_{i+j}$, $S=S_\alpha\cup S_\beta$ --- система представителей алгебры $U(F)$ по идеалу $N_U$.
Пусть, далее, $g_j, ~f_p,~h_k$ --- стандартные одночлены из $S_\alpha$ $(g_s\neq g_t,~f_s\neq f_t,~h_s\neq h_t\text{ при }s\neq t)$, $\Delta_k$ --- идеал в $U(N)$, порожденный $\{N_{i_1}\cdots N_{i_s}|\,i_1 + \cdots + i_s\geqslant k\}$,
\begin{eqnarray*}
v = \sum_j g_j \mu_j,~\mu_j\in U(H\cap N);
\end{eqnarray*}
\begin{eqnarray*}
r \equiv \sum_p f_p \nu_p\mod{U(F)\Delta_i},~\nu_p\in \Delta_{t-1}\setminus \Delta_t;
\end{eqnarray*}
\begin{eqnarray*}
w \equiv \sum_k h_k \lambda_k\mod{U(F)\Delta_{l-i+1}},~\lambda_k\in \Delta_{l-t}\setminus\Delta_{l-t+1}.
\end{eqnarray*}
Если $v \equiv rw\mod{U(F)\Delta_l}$, то $\nu_p\in U(H\cap N)+\Delta_t$.
\end{lemma}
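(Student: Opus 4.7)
\smallskip\noindent\textbf{Proof plan.}  First, absorb the ``tail'' pieces of $r$ and $w$ into $U(F)\Delta_l$: since $\bigl(r-\sum_p f_p\nu_p\bigr)w\in U(F)\Delta_t\cdot U(F)\Delta_{l-t}\subseteq U(F)\Delta_l$ and $r\cdot\bigl(w-\sum_k h_k\lambda_k\bigr)\in U(F)\Delta_{t-1}\cdot U(F)\Delta_{l-t+1}\subseteq U(F)\Delta_l$, one gets $rw\equiv\sum_{p,k}f_p\nu_p h_k\lambda_k\pmod{U(F)\Delta_l}$.  The PBW basis (\ref{alg2_1}) realizes $U(F)$ as the free right $U(N)$-module $S\cdot U(N)$, with $U(F)\Delta_k=S\cdot\Delta_k$; hence $v\equiv rw\pmod{U(F)\Delta_l}$ is equivalent to matching $S$-coefficients modulo $\Delta_l$, and for $v=\sum_j g_j\mu_j$ these are $\mu_j$ at $s=g_j$ and $0$ elsewhere.

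The computational core is to rewrite $(rw)_s$.  The relation $\nu_p h_k=h_k\nu_p+[\nu_p,h_k]$, combined with $[N_i,F]\subseteq N_i$ (so that $[\nu_p,h_k]\in U(F)\Delta_{t-1}$ by Leibniz), iterates to a coproduct-type expansion $\nu_p h_k=\sum_I (h_k)_I\cdot\nu_p^{[I^c]}$, where $I$ runs over subsets of the $e$-letters of $h_k$, $(h_k)_I\in S_\alpha$ is the corresponding sub-product, and $\nu_p^{[I^c]}\in U(N)\cap\Delta_{t-1}$ is the iterated adjoint of $\nu_p$ by $(h_k)_{I^c}$ (preserving $\Delta_{t-1}$ by the ideal property).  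Expanding each $f_p(h_k)_I$ in the PBW basis of $U(H)$ and noting that any PBW monomial of $U(H)$ containing an $a$-factor lies in $U(H)\Delta_1$ (so that, multiplied by $\nu_p^{[I^c]}\lambda_k\in\Delta_{l-1}$, it lands in $\Delta_l$), one sees that modulo $U(F)\Delta_l$ only the projection $\sigma(f_p(h_k)_I)\in\mathrm{span}(S_\alpha)$ onto the pure $e$-part survives, yielding
\[
(rw)_s\;\equiv\;\sum_{p,k,I}\tau_s\!\bigl(f_p(h_k)_I\bigr)\,\nu_p^{[I^c]}\,\lambda_k\pmod{\Delta_l}
\]
for $s\in S_\alpha$, where $\tau_s$ extracts the $s$-coefficient.

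To finish, I would pass to the associated graded algebra $\operatorname{gr}^\Delta U(F)\cong U(\tilde F)$ for $\tilde F=\operatorname{gr}^N\!N\rtimes F/N$, turning $v\equiv rw\pmod{U(F)\Delta_l}$ into the identity $\tilde v=\tilde r\tilde w$ in homogeneous degree $l-1$, with $\tilde v$ belonging to $U(\tilde H)$ for $\tilde H=\operatorname{gr}^N(H\cap N)\rtimes H/(H\cap N)$.  The main obstacle is precisely here: from this single equality one must isolate the $b$-content of each individual $\tilde\nu_p$.  The plan is to use the distinctness of the $f_p$'s, $h_k$'s and $g_j$'s together with the matched $S$-coefficient equations above to extract constraints of the form ``$\tilde\nu_p\tilde\lambda_k$ lies in $U(\operatorname{gr}^N(H\cap N))$ modulo controlled corrections arising from the proper sub-products $(h_k)_I$''; the PBW-freeness of $U(\operatorname{gr}^N\!N)$ as a right module over $U(\operatorname{gr}^N(H\cap N))$ with basis the $b$-monomials, combined with $\tilde\lambda_k\ne 0$ (forced by $\lambda_k\notin\Delta_{l-t+1}$) and the domain property of $U(\operatorname{gr}^N\!N)$, then forces the $b$-part of $\tilde\nu_p$ to vanish, i.e., $\tilde\nu_p\in U(\operatorname{gr}^N(H\cap N))$, equivalently $\nu_p\in U(H\cap N)+\Delta_t$, as required.
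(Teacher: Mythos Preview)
Your reduction in the first two paragraphs is fine: absorbing the tails, writing $U(F)=S\cdot U(N)$ as a free right $U(N)$-module with $U(F)\Delta_k=S\cdot\Delta_k$, and your coproduct-type expansion $\nu_p h_k=\sum_I (h_k)_I\,\nu_p^{[I^c]}$ together with the observation that only the pure $e$-projection of $f_p(h_k)_I$ survives modulo $\Delta_l$ are all correct and set up a reasonable framework.

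The gap is exactly where you flag it. From the system of $S$-coefficient identities
\[
\mu_s \equiv \sum_{p,k,I}\tau_s\bigl(f_p(h_k)_I\bigr)\,\nu_p^{[I^c]}\lambda_k \pmod{\Delta_l}
\]
you propose to ``extract constraints of the form $\tilde\nu_p\tilde\lambda_k\in U(\operatorname{gr}(H\cap N))$ modulo controlled corrections'' and then invoke the domain property. Two problems: (i) you give no mechanism for actually isolating a single pair $(p,k)$ from this sum --- the proper sub-products $(h_k)_I$ mix all indices inextricably, and distinctness of the $f_p$, $h_k$, $g_j$ alone does not separate them; (ii) even granting $\tilde\nu_p\tilde\lambda_k\in U(\operatorname{gr}(H\cap N))$, the implication $\tilde\nu_p\in U(\operatorname{gr}(H\cap N))$ does not follow from the domain property, since $\tilde\lambda_k$ itself typically has nontrivial $b$-content.

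The paper bypasses your entire commutation expansion with a much shorter leading-term device. Working in $U(N/N_l)$, it introduces the integer-valued function $\psi$ on nonzero elements that records the maximal number of $b$-letters among PBW monomials occurring in the expansion; the preparatory discussion before the lemma shows $\psi$ is additive under multiplication at the $\delta$-graded level. One then argues by contradiction: if some $\nu_p\notin U(H\cap N)+\Delta_t$, let $M_\nu=\max_p\psi(\bar\nu_p)>0$, choose $f_{p_0}$ maximal among those $f_p$ with $\psi(\bar\nu_p)=M_\nu$, and likewise $h_{k_0}$ maximal with $\psi(\bar\lambda_k)=M_\lambda$. Additivity gives $\psi(\overline{\nu_{p_0}\lambda_{k_0}})=M_\nu+M_\lambda>0$, and the extremal choice of $(p_0,k_0)$ guarantees this contribution cannot be cancelled by any other $(p,k)$. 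Since every $\mu_j\in U(H\cap N)$ has $\psi(\bar\mu_j)=0$, this contradicts $v\equiv rw\pmod{U(F)\Delta_l}$. The numerical invariant $\psi$ together with the double maximality selection is precisely the missing ingredient in your step~5.
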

\begin{proof}
Пусть $\varphi$ --- естественный гомоморфизм $U(F)\rightarrow U(F/N_l)$,
$\delta_t$ --- идеал в $U(N/N_l)$, порожденный $\{\varphi(N_{i_1})\cdots \varphi(N_{i_s})|\,i_1 + \cdots + i_s\geqslant t\}$, $h=\varphi(H\cap N)$.\\
Из $U(N)N_l\leqslant \Delta_t\cap\Delta_{l-t+1}$ следует, что $\varphi(\nu_p)\in \delta_{t-1}\setminus \delta_t$, $\varphi(\lambda_k)\in \delta_{l-t}\setminus \delta_{l-t+1}$.
Определим функцию $\psi$ на элементах $U(N/N_l)$.
Если $u$ --- слово вида (\ref{alg2_1_0}), то полагаем $\psi(u)=\nu$. Если $u=z_1m_1+\ldots+z_km_k$, $z_1,\ldots,z_k$ --- ненулевые элементы поля $P$, $m_1,\ldots,m_k$ --- попарно различные стандартные одночлены, то полагаем
\begin{eqnarray*}
\psi (u)=\max_p\, (\psi(m_p)).
\end{eqnarray*}
Если $q\in \Delta_k\setminus \Delta_{k+1}$ $(k<l)$, то $\bar{q}$ будет обозначать линейную комбинацию принадлежащих $\delta_k\setminus \delta_{k+1}$ стандартных одночленов такую, что $\varphi(q)\equiv \bar{\delta}\mod{\delta_{k+1}}$.
Предположим, найдутся $\nu_p$ такие, что $\nu_p\not\in U(H\cap N)+\Delta_t$.
Тогда из $U(N)N_l\leqslant \Delta_t$ следует, что $\varphi(\nu_p)\notin U(h)+\delta_t$.
Поэтому $\psi_G(\overline{\nu}_p)> 0$ для таких $\nu_p$.
Обозначим
\[
f_{p_0}=\max_p\, (f_p\mid\psi(\overline{\nu}_p\,)=M_\nu),
\]
\[
h_{k_0}=\max_k\, (h_k\mid\psi(\overline{\lambda}_k\,)=M_\lambda),
\]
где $M_\nu$ и $M_\lambda$ --- максимальные значения, принимаемые функцией $\psi$ на элементах $\overline{\nu}_p$ и $\overline{\lambda}_k$ соответственно.
Так как $\overline{\nu_{p_0}\lambda_{k_0}}$ --- линейная комбинация принадлежащих $\delta_{l-1}\setminus \delta_l$ стандартных одночленов,
$\psi(\overline{\nu_{p_0}\lambda_{k_0}})=M_\nu+M_\lambda$, $M_\nu > 0$, $\psi(\overline{\mu}_j\,)=0$, то получаем противоречие.
\end{proof}

\begin{lemma}\label{lm1_2}
Пусть $F$ --- свободная сумма нетривиальных алгебр Ли
$A_i$ $(i\in I)$, $T\subseteq I$, $H$ --- свободная
сумма алгебр $A_i~(i\in T)$, $N$ --- идеал алгебры $F$,
$S=S_\alpha\cup S_\beta$ --- система представителей алгебры $U(F)$ по идеалу $N_U$, $\delta_1,\ldots,\delta_l$, $\mu_1,\ldots,\mu_k$ --- элементы из $S$,
$\delta_i <\delta_j$, $\mu_i <\mu_j$ при $i <j$.\\
Тогда если $\{\delta_1,\ldots,\delta_l,\mu_1,\ldots,\mu_k\}\not\subseteq S_\alpha$, то
найдутся $\mu\in S_\beta,\,i_0,\,j_0$ такие, что $\mu$ входит в разложение $\delta_{i_0}\mu_{j_0}$ по базису $U(F)$
и не входит в разложение $\delta_i\mu_j$ по базису $U(F)$ при $(i_0,j_0)\neq (i,j)$.
\end{lemma}
\begin{proof}
Пусть стандартный базис алгебры $U(F)$ состоит из слов (\ref{alg2_1})
и $\{\delta_1,\ldots,\delta_l,\mu_1,\ldots,\mu_k\}\not\subseteq S_\alpha$.
Если $u$ --- стандартный одночлен, то полагаем $d(u)=\theta$.
Обозначим через $x$ --- максимальный элемент из $\{d(\delta_1),\ldots,d(\delta_l)\}$, через $y$ --- максимальный элемент из $\{d(\mu_1),\ldots,d(\mu_k)\}$. Тогда $x+y>0$. Пусть $\delta_{i_0}$ --- максимальный элемент из тех $\delta_i$, для которых
$d(\delta_i)=x$, $\mu_{j_0}$ --- максимальный элемент из тех $\mu_j$, для которых
$d(\mu_j)=y$. Если $\mu$ --- максимальный из стандартных одночленов,
входящих в разложение $\delta_{i_0}\mu_{j_0}$ по базису $U(F)$, то $d(\mu)=x+y$.
Т.е. $\mu\in S_\beta$ и $\mu$ не входит в разложение $\delta_i\mu_j$ по базису $U(F)$ при $(i_0,j_0)\neq (i,j)$.
\end{proof}

\begin{lemma}\label{lm2_2}
Пусть $F$ --- свободная алгебра Ли с базой $\{g_j | j\in J\}$,
$U(F)$ --- универсальная обертывающая алгебра, $U_0(F)$ --- идеал, порожденный $F$ в $U(F)$, $v\in F$. Тогда и только тогда $v\in F_{(n)}\setminus F_{(n+1)}$, когда
$v\in U_0(F)^n\setminus  U_0(F)^{n+1}$.
\end{lemma}
\begin{proof}
Известно, что $U(F)$ --- свободная ассоциативная алгебра с единицей и с базой $\{g_j | j\in J\}$, откуда следует утверждение леммы.
\end{proof}

\begin{lemma}\label{lm3_2}
Пусть $F$ --- свободная алгебра Ли, $g_1,\ldots,g_n,\ldots$ --- база $F$, $U(F)$ --- универсальная обертывающая алгебра, $U_0(F)$ --- идеал, порожденный $F$ в $U(F)$, $v\in F$, $D_1,\ldots,D_n,\ldots$ --- производные Фокса алгебры $F$.
Если $D_1(v)\notin U_0(F)^{k-1}$, то $D_1([v,g_2])\notin U_0(F)^k$.
\end{lemma}
\begin{proof}
Ясно, что $D_1([v,g_2])=D_1(v)g_2$.
Так как $U(F)$ --- свободная ассоциативная алгебра с единицей и с базой $g_1,\ldots,g_n,\ldots$, то $D_1([v,g_2])\notin U_0(F)^k$.
\end{proof}

\begin{proposition}\label{tm4}
Пусть $F$ --- свободная
сумма нетривиальных алгебр Ли $A_i$,
$N$ --- допускающий элементарные эндоморфизмы алгебры $F$ идеал алгебры $F$ такой, что $N\cap A_i=0$ $(i\in I;~|I|>2)$,
\begin{eqnarray}\label{tm4_0}
N=N_{11} \geqslant \ldots \geqslant N_{1,m_1+1}=N_{21} \geqslant \ldots \geqslant N_{y,m_y+1},
\end{eqnarray}
где $N_{kl}$ --- l-я степень алгебры $N_{k1}$.
Пусть, далее, $K\subset I$, $|K|>1$, $H$ --- свободная сумма алгебр $A_i~(i\in K)$, $R$ --- идеал алгебры $F$, $R\leqslant N$.
Если $H\cap (R+N_{1j})\neq H\cap N_{1j}$, то $H\cap (R+N_{kl})\neq H\cap N_{kl}\, (N_{kl}\leqslant N_{1j})$.
\end{proposition}
\begin{proof}
Отметим, что $H\cap N_{kl}=(H\cap N_{k1})_{(l)}$.\\
Ясно, что $H\cap N_{kl}\supseteq (H\cap N_{k1})_{(l)}$.
Пусть $\phi$ --- эндоморфизм алгебры $F$ такой, что
$\phi(a)=0$ $(a\in A_i,\,i\not\in K)$, $\phi(a)=a$  $(a\in A_i,\,i\in K)$; $u\in H\cap N_{kl}$.
Тогда $\phi(N_{kl})=(H\cap N_{k1})_{(l)}$, поэтому $u=\phi(u)\in (H\cap N_{k1})_{(l)}$.\\
Следовательно, $(H\cap N_{k1})_{(l)}\supseteq H\cap N_{kl}$.\\
Предположим, $H\cap (R+N_{1j})\neq H\cap N_{1j}$. Покажем, что
\begin{eqnarray}\label{pr1}
H\cap (R+N_{1l})> (H\cap N)_{(l)}\,~(N_{1l}\leqslant N_{1j}).
\end{eqnarray}
По условию, $H\cap (R+N_{1j})> (H\cap N)_{(j)}$.
Пусть для всех членов ряда~(\ref{tm4_0}) от $N_{1j}$ до $N_{1l}$ включительно формула~(\ref{pr1}) верна ($l\geqslant j$).
Обозначим через $B$ алгебру $H\cap N$ и через $U_0(B)$ --- идеал, порожденный $B$ в $U(B)$.
Выберем в алгебре $B$ базу $x_1,\ldots,x_m,\ldots$; $\partial_1,\ldots,\partial_m,\ldots$ --- соответствующие этой базе
производные Фокса алгебры $B$.
Пусть $v\in (H\cap (R+N_{1l}))\setminus B_{(l)}$.
Мы можем и будем считать, что
$\partial_1(v)\notin U_0(B)^{l-1}$.
Полагаем $w=[v,x_2]$.
По лемме~\ref{lm3_2}, $\partial_1(w)\notin U_0(B)^l$.\\
Следовательно, $w\in (H\cap (R+N_{1,l+1}))\setminus B_{(l+1)}$.\\
Теперь соображения индукции заканчивают доказательство формулы~(\ref{pr1}).
Из (\ref{pr1}) следует $H\cap (R+N_{21})> H\cap N_{21}$.\\
Остается заметить, что из $H\cap (R+N_{k1})> H\cap N_{k1}$ следует\\
$(H\cap (R+N_{k1}))_{(l)}> (H\cap N_{k1})_{(l)}~(l\in {\bf N})$, т.е.
$H\cap (R+N_{kl})> H\cap N_{kl}\, (N_{kl}\leqslant N_{21})$.
\end{proof}

\begin{proposition}\label{tm2}
Пусть $F$ --- свободная
сумма нетривиальных алгебр Ли $A_i$,
$N$ --- допускающий элементарные эндоморфизмы алгебры $F$ идеал алгебры $F$ такой, что $N\cap A_i=0$ $(i=1,\ldots,n)$,
\begin{eqnarray}\label{tm2_0}
N=N_{11} \geqslant \ldots \geqslant N_{1,m_1+1}=N_{21} \geqslant \ldots \geqslant N_{s,m_s+1},
\end{eqnarray}
где $N_{kl}$ --- l-я степень алгебры $N_{k1}$.\\
Пусть, далее, $r\in N_{1i}\backslash N_{1,i+1}\,(i\leqslant m_1)$, $R$ --- идеал, порожденный в алгебре $F$ элементом $r$, $H$ --- свободная
сумма алгебр $A_i~(i=1,\ldots,n-1)$.\\
Если $H\cap (R+N_{21})=H\cap N_{21}$, то $H\cap (R+N_{kl})=H\cap N_{kl}\,(k> 1)$.
\end{proposition}
\begin{proof}
\noindent Отметим, что $H\cap N_{kl}=(H\cap N_{k1})_{(l)}$.
Пусть $D_1,\ldots,D_n$ --- производные Фокса алгебры $F$. Докажем, что $D_n(r)\not\equiv 0\mod {(R+N_{21})_U}$. Предположим противное.
Тогда по теореме~\ref{alg1_tm2} нашлись бы $v_1,\ldots,v_d$ из $H\cap (R+N_{21})$; $f_1,\ldots,f_d$ из $F$
такие, что
\begin{eqnarray*}
r\equiv [v_1,f_1]+\ldots +[v_d,f_d]\mod [R+N_{21},R+N_{21}].
\end{eqnarray*}
По условию $H\cap (R+N_{21})=H\cap N_{21}$, поэтому $v_1,\ldots,v_d$ принадлежат $N_{21}$. Следовательно, $r\in N_{1,i+1}$ и мы получаем противоречие.\\
Пусть $H\cap (R+N_{kl})=H\cap N_{kl}$ для всех членов ряда (\ref{tm2_0}) от $N_{21}$ до $N_{kl}$
включительно $(N_{21} \geqslant N_{kl}$, $l\leqslant m_k)$. Требуется доказать, что $H\cap (R+N_{k,l+1})=H\cap N_{k,l+1}$.\\
Обозначим через $B$ идеал $R+N_{k1}$, через $S=S_\alpha\cup S_\beta$ --- систему представителей алгебры $U(F)$ по идеалу $B_U$,
через $U_0(B)$ --- идеал, порожденный $B$ в $U(B)$.
Выберем $v\in H\cap (R+N_{k,l+1})$, в алгебре $U(F)$ --- стандартный базис, состоящий из слов вида
(\ref{alg2_1}), в $B$ --- базу $\{x_{kz}\}$ и
обозначим через $\{\partial_{kz}\}$ соответствующие этой базе производные Фокса алгебры $B$ $(z=1,\ldots,m,\ldots)$.\\
Найдутся $u\in B_{(l+1)}$, $k_i\in U(B)$, $\mu_i\in S$, $i=1,\ldots,d$ $(\mu_i\neq \mu_j\mbox{ при }i\neq j)$ такие, что
\begin{eqnarray}\label{tm2_3}
D_m(v)\equiv D_m(r)\cdot  \sum_{i={1}}^{d} \mu_i k_i +\sum_z D_m(x_{kz})\partial_{kz}(u)\mod{(R+N_{kl})_U},
\end{eqnarray}
$m=1,\ldots,n$. Будем иметь
\begin{eqnarray}\label{tm2_4}
0\equiv D_n(r)\cdot  \sum_{i={1}}^{d} \mu_i k_i+ \sum_z D_n(x_{kz})\partial_{kz}(u)\mod{(R+N_{kl})_U}.
\end{eqnarray}
Из $D_n(r)\not\equiv 0\mod {B_U}$ следует, что
\begin{eqnarray*}
D_n(r) = \sum_{i={1}}^q  \delta_i t_i ,
\end{eqnarray*}
где $t_i\in U(B)$, $\delta_i\in S$, $i=1,\ldots,q$ $(\delta_i\neq \delta_j\mbox{ при }i\neq j)$ и
не все элементы из $\{t_1,\ldots,t_q\}$ принадлежат $U_0(B)$.
Пусть $t_{j_1},\ldots,t_{j_b}$ не принадлежат $U_0(B)$, $\delta_{j_1}<\ldots <\delta_{j_b}$.
Выберем максимальное $l_0$ такое, что $k_i\in U_0(B)^{l_0}\mod{(R+N_{kl})_U}$, $i=1,\ldots,d$.
Покажем, что $l_0\geqslant l$. Предположим противное.
Пусть  $k_{i_1},\ldots,k_{i_a}$ не принадлежат $U_0(B)^{l_0 +1}\mod{(R+N_{kl})_U}$,
$\mu_{i_1}<\ldots <\mu_{i_a}$.\\
Если $\mu$ --- максимальный из стандартных одночленов,
входящих в разложение $\delta_{j_b}\mu_{i_a}$ по базису $U(F)$, то $\mu$ больше любого  из стандартных одночленов, входящих в разложение $\delta_{j_c}\mu_{i_d}$ по базису $U(F)$ $((c,d)\neq (b,a))$ и $\mu\in S$.
Тогда
\begin{eqnarray}\label{tm2_4_1}
D_n(r)\cdot  \sum_{i={1}}^{d} \mu_i k_i = \mu t_{ba} +  \sum_{i={1}}^{\hat{q}}  \hat{\delta_i} \hat{t_i} ,
\end{eqnarray}
где $t_{ba}\not\in U_0(B)^{l_0 +1}\mod{(R+N_{kl})_U}$, $\hat{t}_i\in U(B)$, $\hat{\delta}_i\in S, \hat{\delta}_i\neq \mu$,
$i=1,\ldots,\hat{q}$.
По лемме \ref{lm2_2}, все $\partial_{kz}(u)$ лежат в $U_0(B)^l$,
поэтому (\ref{tm2_4_1}) противоречит (\ref{tm2_4}).\\
Таким образом, $k_i\in U_0(B)^l\mod{(R+N_{kl})_U}$, $i=1,\ldots,d$.
Следовательно, (\ref{tm2_3}) можно переписать в виде
\begin{eqnarray}\label{tm2_6}
D_m(v)\equiv \sum_{i={1}}^{d_m} \mu_{im} g_{im} \mod{(R+N_{kl})_U},~m=1,\ldots,n,
\end{eqnarray}
где $\mu_{im}\in S$, $g_{im}\in U_0(B)^l$.\\
Если $x$ --- элемент базы алгебры $H\cap B$, то найдутся $j_x\in \{1,\ldots,n-1\}$, $g_x\in {\bf N}$ такие, что
$D_{j_x}(x) \equiv \sum_{p=1}^{g_x} \lambda_p t_p\mod{B_U}$, где $0\neq \lambda_p\in P$, $t_p \in S_\alpha$.
Элементу $x$ поставим в соответствие строку $(M(x),j_x)$, где $M(x)$ --- произвольный элемент из $\{t_1,\ldots,t_{g_x}\}$.
Пусть $z_1,\ldots,z_p$ --- попарно различные элементы базы алгебры $H\cap B$ такие, что $v$ принадлежит алгебре, порожденной этими элементами, $(M(z_i),j_i)$ --- строка, поставленная в соответствие элементу $z_i$ описанным выше способом.
Если в $z_2,\ldots,z_p$ найдется элемент $z_i$, которому может быть поставлена в соответствие строка равная $(M(z_1),j_1)$, то
выберем $\gamma\in P$ так, чтобы элементу $z_i^\prime=z_i-\gamma z_1$ нельзя было поставить в соответствие строку равную $(M(z_1),j_1)$.
Заменим $z_i$ на $z_i^\prime=z_i-\gamma z_1$ и элементу $z_i^\prime$ поставим в соответствие строку $(M(z_i^\prime),j_i^\prime)$. Продолжая аналогичные рассуждения, найдем такую базу $X$ алгебры $H\cap B$ и такие попарно различные элементы $x_1,\ldots,x_p$ из $X$, что
$v$ принадлежит алгебре, порожденной $x_1,\ldots,x_p$, каждому $x_i$ поставлена в соответствие строка $(M(x_i),j_i)$ и строку $(M(x_i),j_i)$ нельзя поставить в соответствие $x_t$, $i+1\leqslant t\leqslant p$.
Обозначим через $\partial_z$ производные Фокса, соответствующие элементам $x_z$ $(z=1,\ldots,p)$.
Покажем, что
\begin{eqnarray}\label{tm2_6_00}
\partial_z(v)\in  U_0(B)^l\mod{(R+N_{kl})_U}.
\end{eqnarray}
Предположим, формула (\ref{tm2_6_00}) неверна.\\
Из $v\in H\cap N_{kl}$ следует $v\in (H\cap N_{k1})_{(l)}$, поэтому
$\partial_z(v)\in U_0(B)^{l-1}$.
Выберем наименьшее $i$ такое, что $\partial_i(v)\not\in U_0(B)^l\mod{(R+N_{kl})_U}$.\\
Ввиду $D_{j_i}(v)= \sum_{z=1}^p D_{j_i}(x_z)\partial_z(v)$ справедлива формула
\begin{eqnarray}\label{tm2_6_0}
D_{j_i}(v) \equiv \alpha\cdot M(x_i)\partial_i(v)+\sum_{c=1}^g t_c\lambda_c+\sum_{m={1}}^{d} \mu_m g_m \mod{(R+N_{kl})_U},
\end{eqnarray}
где $0\neq \alpha\in P$, $\lambda_c\in U(B)$, $t_c\in S$, $t_c\neq M(x_i)$,
$g_m\in U_0(B)^l$, $\mu_m\in S$.
Формула (\ref{tm2_6_0}) противоречит (\ref{tm2_6}), т.е. справедлива формула (\ref{tm2_6_00}).\\
Положим $H_t=H\cap (R+N_{kt})$. Обозначим через $U_0(H_1)$ --- идеал, порожденный $H_1$ в $U(H_1)$,
через $\Delta_l^\prime$ --- идеал, порожденный $\{H_{i_1}\cdots H_{i_s}|\,i_1 + \cdots + i_s\geqslant l\}$ в $U(H_1)$.\\
Так как $\partial_z(v)\in U(H_1)\cap U_0(B)^l\mod{(R+N_{kl})_U}$, то $\partial_z(v)\in \Delta_l^\prime\mod{(R+N_{kl})_U}$.\\
Ввиду $H\cap (R+N_{kl})=H\cap N_{kl}=(H\cap N_{k1})_{(l)}\subseteq U_0(H_1)^l$ получаем, что $\partial_z(v)\in \Delta_l^\prime$.
Если $t\leqslant l$, то $H_t=H\cap N_{kt} = (H_{k1})_{(t)}\subseteq U_0(H_1)^k$, поэтому
$\Delta_l^\prime\subseteq U_0(H_1)^l$, т.е. $\partial_z(v)\in U_0(H_1)^l$, $z=1,\ldots,p$.
Это означает, что $v\in U_0(H_1)^{l+1}$, следовательно $v\in (H\cap N_{k1})_{(l+1)}$.
Теперь соображения индукции заканчивают доказательство.
\end{proof}

\begin{proposition}\label{tm5}
Пусть $F$ --- свободная сумма нетривиальных алгебр Ли $A_i$,
$N$ --- допускающий элементарные эндоморфизмы алгебры $F$ идеал алгебры $F$ такой, что $N\cap A_i=0$ $(i=1,\ldots,n)$,
\begin{eqnarray}\label{tm5_0}
N=N_{11} \geqslant \ldots \geqslant N_{1,m_1+1}=N_{21} \geqslant \ldots \geqslant N_{s,m_s+1},
\end{eqnarray}
где $N_{kl}$ --- l-я степень алгебры $N_{k1}$.\\
Пусть, далее, $r\in N_{1i}\backslash N_{1,i+1}\,(i\leqslant m_1)$, $R$ --- идеал, порожденный в алгебре $F$ элементом $r$, $H$ --- свободная
сумма алгебр $A_i~(i=1,\ldots,n-1)$.\\
Если $r\not\in H+N_{1,i+1}$, то $H\cap (R+N_{1l})=H\cap N_{1l}\,(l=1,\ldots,m_1+1)$.
\end{proposition}
\begin{proof}
\noindent Отметим, что $H\cap N_{1l}=(H\cap N_{11})_{(l)}$.\\
Ясно, что $H\cap (R+N_{1i})=H\cap N_{1i}$.
Требуется доказать, что
\begin{eqnarray}\label{tm3_0_0}
H\cap (R+N_{1l})=H\cap N_{1l}\,(l\geqslant i).
\end{eqnarray}
Пусть для всех членов ряда (\ref{tm5_0}) от $N_{1i}$ до $N_{1l}$
включительно $(N_{1i} \geqslant N_{1l})$ формула (\ref{tm3_0_0}) верна.
Покажем, что $H\cap (R+N_{1,{l+1}})=H\cap N_{1,{l+1}}\,(l\leqslant m_1)$.\\
Обозначим через $D_1,\ldots,D_n$ --- производные Фокса алгебры $F$, $U_0(N)$ --- идеал, порожденный $N$ в $U(N)$,
через $S=S_\alpha\cup S_\beta$ --- систему представителей алгебры $U(F)$ по идеалу $N_U$.
Выберем $v\in H\cap (R+N_{1,l+1})$, в алгебре $U(F)$ --- стандартный базис, состоящий из слов вида
(\ref{alg2_1}), в $N$ --- базу $\{x_z\}$ и
обозначим через $\{\partial_z\}$ соответствующие этой базе производные Фокса алгебры $N$ $(z=1,\ldots,m,\ldots)$.\\
Если $x$ --- элемент базы алгебры $N$, то найдутся $j_x\in \{1,\ldots,n\}$, $g_x\in {\bf N}$ такие, что
$D_{j_x}(x) \equiv \sum_{p=1}^{g_x} \lambda_p t_p\mod{N_U}$, где $0\neq \lambda_p\in P$, $t_p \in S$.
Будем говорить, что элементы $t_p \in S$ входят в разложение $D_{j_x} (x)$ по модулю $N_U$.\\
Пусть $x_z$ --- элемент базы алгебры $N$.\\
а) Если $D_n (x_z)\not\equiv ~0\mod{N_U}$, то ставим в соответствие $x_z$ строку $(M(x_z),n)$, где $M(x_z)\in S$ и входит в разложение $D_n (x_z)$ по модулю $N_U$.\\
б) Если $D_n (x_z)\equiv ~0\mod{N_U}$ и найдутся $j_z\in \{1,\ldots,n-1\}$,
$M(x_z)\in S_\beta$ такие, что $M(x_z)$ входит в разложение $D_{j_z} (x_z)$ по модулю $N_U$,
то ставим в соответствие $x_z$ строку $(M(x_z),j_z)$.\\
в) Если $D_n (x_z)\equiv ~0\mod{N_U}$ и в разложения $D_k (x_z)$, $k\in \{1,\ldots,n-1\}$, по модулю $N_U$
не входят элементы из $S_\beta$, то ставим в соответствие $x_z$ строку $(M(x_z),j_z)$, где $M(x_z)\in S_\alpha$ и входит в разложение $D_{j_z} (x_z)$ по модулю $N_U$. В этом случае по лемме~\ref{alg1_lm1}
найдется $y\in H\cap N$ такой, что $D_j (x)\equiv D_j (y)\mod{N_U}$, $j\in \{1,2,\ldots,n\}$.\\
Следовательно, $x\in H\cap N\mod{[N,N]}$ \cite{Shm}.\\
Элемент $x_z$ базы алгебры $N$ будем называть $\alpha$-порождающим алгебры $N$, если ему поставлена в соответствие строка
$(M(x_z),j_z)$ описанным выше способом в) и $\beta$-порождающим алгебры $N$ в противном случае.\\
Пусть $w\in N$, $Z$ --- множество свободных порождающих алгебры $N$, $z_1,\ldots,z_p$ --- попарно различные элементы из $Z$, такие, что $w$ принадлежит алгебре, порожденной этими элементами, $(M(z_1),j_1)$ --- строка, поставленная в соответствие элементу $z_1$.
Если в $z_2,\ldots,z_p$ найдется элемент $z_k$, которому может быть поставлена в соответствие строка равная $(M(z_1),j_1)$, то
выберем $\gamma\in P$ так, чтобы $M(z_1)$ не входил в разложение
$D_{j_1} (z_k-\gamma z_1)$ по модулю $N_U$. Заменим $z_k$ на $z_k^\prime=z_k-\gamma z_1$ и элементу $z_k^\prime$ поставим в соответствие строку $(M(z_k^\prime),j_k^\prime)$. Продолжая аналогичные рассуждения, найдем такое множество $T$ свободных порождающих алгебры $N$ и такие попарно различные элементы $t_1,\ldots,t_p$ из $T$, что
$w$ принадлежит алгебре, порожденной $t_1,\ldots,t_p$, каждому $t_k$ поставлена в соответствие строка $(M(t_k),j_k)$ такая, что $M(t_k)$ входит в разложение
$D_{j_k} (t_k)$ по модулю $N_U$ и $M(t_k)$ не входит в разложение
$D_{j_k} (t_l)$ по модулю $N_U$, $k+1\leqslant l\leqslant p$.\\
Мы можем и будем считать, что $r$ и $v$ принадлежат алгебре, порожденной элементами $x_1,\ldots,x_a$,
в соответствие $x_k$ поставлена строка $(M(x_k),j_k)$ и
эту строку нельзя поставить в соответствие $x_t~(k+1\leqslant t\leqslant a)$.\\
Найдутся $u\in N_{(l+1)}$, $k_p\in U(N)$, $\mu_p\in S$, $p=1,\ldots,d$ $(\mu_p\neq \mu_t\mbox{ при }p\neq t)$ такие, что
\begin{eqnarray}\label{tm3_3}
D_m(v)\equiv D_m(r)\cdot  \sum_{p={1}}^{d} \mu_p k_p +\sum_z D_m(x_z)\partial_z(u)\mod{(R+N_{1l})_U},
\end{eqnarray}
$m=1,\ldots,n$. Будем иметь
\begin{eqnarray}\label{tm3_4}
0\equiv D_n(r)\cdot  \sum_{p={1}}^{d} \mu_p k_p+ \sum_z D_n(x_z)\partial_z(u)\mod{(R+N_{1l})_U}.
\end{eqnarray}
Идеал в $U(N)$, порожденный $\{(R+N_{1i_1})\cdots (R+N_{1i_s})|\,i_1 + \cdots + i_s\geqslant k\}$,
обозначим через $\Delta_k$.
Отметим, что если $k\leqslant i$, то $U_0(N)^k=\Delta_k$.\\
По лемме \ref{lm2_2}, $\partial_z(u)\in U_0(N)^l$, $\partial_z(v)\in U_0(N)^{l-1}$, $\partial_z(r)\in U_0(N)^{i-1}$.\\
Рассмотрим случай $\{\partial_z(v) | z=1,\ldots,a\} \not\subseteq \Delta_l\mod{(R+N_{1l})_U}$.
Выберем минимальное $k$ такое, что $\partial_k(r) \in \Delta_{i-1}\setminus \Delta_i$.\\
Ввиду $D_{j_k}(r)= \sum_{z=1}^a D_{j_k}(x_z)\partial_z(r)$ справедлива формула
\begin{eqnarray*}
D_{j_k}(r) \equiv \alpha\cdot M(x_k)\partial_k(r)+\sum_{p=1}^g t_p \lambda_p \mod{U(F)\Delta_i},
\end{eqnarray*}
где $0\neq \alpha\in P$, $\lambda_p\in \Delta_{i-1}$, $t_p\in S$, $t_p\neq M(x_k)$.\\
Т.е. $D_{j_k}(r) =\delta_1 a_1+ \cdots +\delta_q a_q$, $a_p \in \Delta_{i-1}$ и $a_p \in \Delta_{i-1}\setminus \Delta_i$ для некоторых $p$; $\delta_t\in S$, $\delta_t\neq \delta_p$ при $t\neq p$. Затем, выбрав минимальное $k^\prime$ такое, что $\partial_{k^\prime}(v) \in \Delta_{l-1}\setminus \Delta_l$,
аналогичными рассуждениями докажем, что $D_{j_{k^\prime}}(v) =\delta^\prime_1 a^\prime_1+ \cdots +\delta^\prime_{q^\prime} a^\prime_{q^\prime}$, $a^\prime_p \in \Delta_{l-1}$ и $a^\prime_p \in \Delta_{l-1}\setminus \Delta_l$ для некоторых $p$, $\delta^\prime_t\in S$, $\delta^\prime_t\neq \delta^\prime_p$ при $t\neq p$.\\
Тогда из (\ref{tm3_3}) следует, что $k_p \in \Delta_{l-i}$, $p\in \{1,\ldots,d\}$ и
$k_p \in \Delta_{l-i}\setminus \Delta_{l-i+1}$ для некоторых $p$.
Пусть $A$ --- подмножество в $\{1,\ldots,d\}$ такое, что $k_p \in \Delta_{l-i}\setminus \Delta_{l-i+1}$ при $p\in A$,
$C$ --- подмножество в $\{1,\ldots,q\}$ такое, что $a_p \in \Delta_{i-1}\setminus \Delta_i$ при $p\in C$.
Так как $D_m(v)$ --- сумма элементов вида $\gamma b$, где $\gamma\in S_\alpha$, $b\in U(N)\cap U(H)$,
то (\ref{tm3_3}) показывает, что не существует таких $\mu\in S_\beta$, $k_0 \in A$, $p_0 \in C$, что $\mu$ входит в разложение $\delta_{k_0}\mu_{p_0}$ по базису $U(F)$ и не входит в разложение $\delta_k\mu_p$ по базису $U(F)$,
$k \in A$, $p \in C$, $(k_0,p_0)\neq (k,p)$.
Тогда, ввиду леммы \ref{lm1_2}, если $k\in A$, то $\mu_k\in S_\alpha$, т.е.
\begin{eqnarray}\label{tm3_4_1_1}
\sum_{p={1}}^{d} \mu_p k_p\equiv \hat{\mu}_1 b_1+ \cdots +\hat{\mu}_{\hat{d}} b_{\hat{d}}\mod{U(F)\Delta_{l-i+1}},
\end{eqnarray}
$\hat{\mu}_k\in S_\alpha$, $\hat{\mu}_k\neq \hat{\mu}_p$ при $k\neq p$, $b_k\in \Delta_{l-i}\setminus \Delta_{l-i+1}$.\\
Покажем, что если $x_z$ --- $\beta$-порождающий, то
\begin{eqnarray}\label{tm3_4_1_gralg}
\partial_z(r)\in U_0(N)^i,~z\in\{1,\ldots,a\}.
\end{eqnarray}
Предположим противное. Пусть $b$ --- минимальное число, для которого формула (\ref{tm3_4_1_gralg}) неверна,
т.е. $\partial_b(r)\in U_0(N)^{i-1}\setminus U_0(N)^i$.\\
Рассмотрим случай, когда $x_b$ --- $\beta$-порождающий и ему поставлена в соответствие строка $(M(x_b),n)$.
В этом случае:\\
если $x_c$ --- $\beta$-порождающий и $c<b$, то $\partial_c(r)\in U_0(N)^i$,\\
если $x_c$ --- $\alpha$-порождающий, то $D_n(x_c)\equiv 0\mod{N_U}$, откуда $D_n(x_c)\partial_c(r)\in U(F)\Delta_i$.\\
Тогда из $D_n(r)=\sum_z D_n(x_z)\partial_z(r)$ следует, что
\begin{eqnarray}\label{tm3_4_2_alggr}
D_n(r)\equiv \alpha\cdot M(x_b)\partial_b(r)+\sum_{p=1}^g t_p\lambda_p\mod{U(F)\Delta_i},
\end{eqnarray}
где  $0\neq \alpha\in P$, $\lambda_p\in \Delta_{i-1}$, $t_p\in S$, $t_p\neq M(x_b)$. Из
(\ref{tm3_4_1_1}), (\ref{tm3_4_2_alggr}) следует, что
\begin{eqnarray}\label{tm3_4_2_gralg}
D_n(r)\cdot    \sum_{p={1}}^{d} k_p\mu_p\equiv\hat{\delta}_1 \hat{a}_1+ \cdots +\hat{\delta}_s \hat{a}_s\mod{(R+N_{1l})_U},
\end{eqnarray}
$\hat{\delta}_k\in S$, $\hat{\delta}_k\neq \hat{\delta}_p$ при $k\neq p$, $\{\hat{a}_1,\ldots,\hat{a}_s\}\subset U(N)$ и $\hat{a}_p \in \Delta_{l-1}\setminus \Delta_l$ для некоторых $p$.
Формула (\ref{tm3_4_2_gralg}) противоречит (\ref{tm3_4}).\\
Рассмотрим случай, когда $x_b$ --- $\beta$-порождающий и ему поставлена в соответствие строка $(M(x_b),t)$, $t\neq n$, $M(x_b)\in S_\beta$. Тогда
\begin{eqnarray*}
D_t(r)\equiv  \alpha\cdot M(x_b)\partial_b(r)+\sum_{p=1}^g t_p\lambda_p\mod{U(F)\Delta_i},
\end{eqnarray*}
где  $0\neq \alpha\in P$, $\lambda_p\in \Delta_{i-1}$, $t_p\in S$, $t_p\neq M(x_b)$. Будем иметь
\begin{eqnarray}\label{tm3_4_3_gralg}
D_t(r)\cdot  \sum_{p={1}}^{d} k_p\mu_p\equiv\hat{\delta}_1 \hat{a}_1+ \cdots +\hat{\delta}_s \hat{a}_s\mod{(R+N_{1l})_U},
\end{eqnarray}
$\hat{\delta}_k\in S$, $\hat{\delta}_k\neq \hat{\delta}_p$ при $k\neq p$, $\{\hat{a}_1,\ldots,\hat{a}_s\}\subset U(N)$ и, для некоторых $p$, $\hat{\delta}_p\in S_\beta$, $\hat{a}_p \in \Delta_{l-1}\setminus \Delta_l$.
Формула (\ref{tm3_4_3_gralg}) противоречит (\ref{tm3_3}).\\
Полученные противоречия показывают, что формула (\ref{tm3_4_1_gralg}) верна.\\
Покажем, что если $x_z$ --- $\alpha$-порождающий, то $\partial_z(r)\in U(N\cap H)\mod{U_0(N)^i}$.\\
Предположим противное. Пусть $b$ --- минимальное число, для которого $x_b$ --- $\alpha$-порождающий и
$\partial_z(r)\not\in U(N\cap H)\mod{U_0(N)^i}$. В этом случае:\\
если $x_c$ --- $\alpha$-порождающий и $c<b$, то $\partial_c(r)\in U(N\cap H)\mod{U_0(N)^i}$,\\
если $x_c$ --- $\beta$-порождающий, то ввиду (\ref{tm3_4_1_gralg}) $\partial_c(r)\in U_0(N)^i$.\\
Пусть элементу $x_b$ поставлена в соответствие строка $(M(x_b),t)$. Тогда
\begin{eqnarray}\label{tm3_4_1_2_gralg}
D_t(r)\equiv M(x_b) a_0 + \delta_1 a_1+ \cdots +\delta_q a_q\mod{U(F)\Delta_i},
\end{eqnarray}
где $a_0\not\in U(N\cap H) \mod{U_0(N)^i}$;
$M(x_b),\,\delta_k\in S_\alpha$; $\delta_k\neq M(x_b)$; $\delta_k\neq \delta_p$ при $k\neq p$; $a_k\in \Delta_{i-1}\setminus \Delta_i$.\\
Так как $D_{j_t}(v)$ --- сумма элементов вида $\gamma b$, где $\gamma\in S_\alpha$, $b\in U(H\cap N)$, то из
\begin{eqnarray*}
D_t(v)\equiv (M(x_b) a_0 + \delta_1 a_1+ \cdots +\delta_q a_q)\cdot  (\hat{\mu}_1 b_1 + \cdots + \hat{\mu}_{\hat{d}} b_{\hat{d}})\mod{U(F)\Delta_l},
\end{eqnarray*}
следует ввиду (\ref{tm3_4_1_1}), (\ref{tm3_4_1_2_gralg}) и леммы \ref{lm6_2_alg}, что
$a_0\in U(N\cap H) \mod{U_0(N)^i}$. Противоречие.\\
Следовательно, для всех $\alpha$-порождающих $x_z$
\begin{eqnarray}\label{tm3_4_1_0_gralg}
\partial_z(r)\in U(N\cap H)\mod{U_0(N)^i}.
\end{eqnarray}
Т.е. если $k\in \{1,\ldots,a\}$, то либо $\partial_k(r)\in U_0(N)^i$, либо $D_m(x_k)$ --- линейная комбинация элементов из $S_{\alpha}\mod{N_U}$, $m=1,\ldots,n$, $\partial_k(r)\in U(H\cap N)\mod{U_0(N)^i}$ и $D_n(x_k)\in N_U$.
Пусть $\psi\text{: }U(F)\rightarrow U(F)$ --- эндоморфизм, оставляющий на месте элементы из $H$, отображающий в $0$
элементы из $A_n$. Обозначим $\psi(r)$ через $\hat{r}$.
Ясно, что $\hat{r}\in H\cap N$.\\
Тогда $\hat{r}= \sum_{t={1}}^a \psi(x_t)\psi(\partial_t(r))$, где либо $\partial_t(r)$, $\psi(\partial_t(r))\in \Delta_i$, либо
\begin{eqnarray}\label{main_009}
\psi(x_t)\equiv x_t\mod{[N,N]};\,\psi(\partial_t(r))\equiv  \partial_t(r)\mod{\Delta_i}.
\end{eqnarray}
Если $\partial_t(r)$, $\psi(\partial_t(r))\in \Delta_i$, то $x_t \partial_t(r)$, $\psi(x_t) \psi(\partial_t(r))\in U_0(N)^{i+1}$.\\
Если для $x_t$ верны соотношения (\ref{main_009}), то
$\psi(x_t) \psi(\partial_t(r))\equiv x_t \partial_t(r)\mod{U_0(N)^{i+1}}$
(ввиду $[N,N]\subseteq U_0(N)^2$ и $\partial_t(r)\in U_0(N)^{i-1}$).
Следовательно, $r -\hat{r}\in U_0(N)^{i+1}$, т.е. $r\in H+N_{1,i+1}$. Противоречие.
Полученное противоречие показывает, что $\{\partial_z(v) | z=1,\ldots,a\} \subseteq \Delta_l\mod{(R+N_{1l})_U}$.\\
Положим $H_k=H\cap (R+N_{1k})$. Обозначим через $U_0(H_1)$ идеал, порожденный $H_1$ в $U(H_1)$,
через $\Delta_l^\prime$ --- идеал, порожденный $\{H_{i_1}\cdots H_{i_p}|i_1 + \cdots + i_p\geqslant l\}$ в $U(H_1)$.\\
Так как $U(H_1)\cap \Delta_l\equiv \Delta_l^\prime\mod{(R+N_{1l})_U}$ и
$\partial_z(v)\in U(H_1)\cap\Delta_l\mod{(R+N_{1l})_U}$, то
$\partial_z(v)\in \Delta_l^\prime\mod{(R+N_{1l})_U}$.\\
Ввиду $H\cap (R+N_{1l})=H\cap N_{1l}=(H\cap N_{11})_{(l)}\subseteq U_0(H_1)^l$ получаем, что $\partial_z(v)\in \Delta_l^\prime$.
Если $k\leqslant l$, то $H_k=H\cap N_{1k} = (H_1)_{(k)}\subseteq U_0(H_1)^k$, поэтому
$\Delta_l^\prime\subseteq U_0(H_1)^l$, т.е. $\partial_z(v)\in U_0(H_1)^l$, $z=1,\ldots,a$.
Это означает, что $v\in U_0(H_1)^{l+1}$, следовательно $v\in (H\cap N_{11})_{(l+1)}$.
Теперь соображения индукции заканчивают доказательство.
\end{proof}

\noindent Из предложений \ref{tm4}, \ref{tm2}, \ref{tm5}, вытекает справедливость теоремы~\ref{tm2_algr}.

\section{Теорема о свободе для свободных сумм алгебр Ли с конечным числом определяющих соотношений}

\noindent Пусть $G$ --- алгебра Ли. Следующие преобразования матрицы над $U(G)$ назовем элементарными:
\begin{align}
&\text{Перестановка столбцов $i$ и $j$;}\label{gr2_df1}\\
&\text{Перестановка строк $i$ и $j$;}\label{gr2_df2}\\
&\text{Умножение справа }i\text{-й строки на ненулевой элемент из }U(G);\label{gr2_df3}\\
&\text{Прибавление к }j\text{-й строке }i\text{-й строки,}\label{gr2_df4}\\
&\text{умноженной справа на ненулевой элемент из }U(G)\text{, где }i<j.\notag
\end{align}
Если $M=\|a_{kn}\|$ --- матрица над $U(G)$, $\Phi$ --- последовательность
элементарных преобразований матрицы $M$, то $\Phi(M)$ будет обозначать матрицу, полученную из $M$
последовательностью преобразований $\Phi$.
Матрицу $M$ будем называть треугольной ранга $t$, если
$a_{kk}\neq 0\,(k\leqslant t)$, $a_{kn}=0\,(n<k\leqslant t)$, $a_{kn}=0\,(k> t)$.

\begin{lemma}\label{lm4_2_gr_1}
Пусть $F$ --- алгебра Ли,
$N=N_1 \geqslant \ldots \geqslant N_m\geqslant \ldots$ --- ряд идеалов алгебры $F$, $[N_p\,,N_q\,]\leqslant N_{p+q}$,
$F/N$ --- разрешимая алгебра.\\
Пусть, далее, $\phi$ --- естественный гомоморфизм $U(F)\to U(F/N_m)$, $\Delta_t$ --- идеал, порожденный
$\{\phi(N_{i_1})\cdots \phi(N_{i_c})|i_1 + \cdots + i_c\geqslant t\}$ в $U(F/N_m)$,
$\|a_{kn}\|$ --- $r\times s$ матрица над $U(F/N_m)$, $\phi^\prime$ --- естественный гомоморфизм $U(F/N_m)\to U(F/N)$, $\|\phi^\prime(a_{kn})\|$ --- треугольная $r\times s$ матрица над $U(F/N)$, $\psi$ --- функция на $U(F/N_m)$, такая, что $\psi(0)=\infty$; если $\phi^\prime(\alpha)\neq 0$, то $\psi(\alpha)=0$;
если $\alpha\in \Delta_j\setminus \Delta_{j+1}$, то $\psi(\alpha)=j$.
Тогда матрицу $\|a_{kn}\|$ последовательностью элементарных преобразований {\rm (\ref{gr2_df3})}, {\rm (\ref{gr2_df4})}
можно привести к треугольной $r\times s$ матрице $\|b_{kn}\|$ такой, что $\psi(b_{kk})\leqslant \psi(b_{kn})$.
\end{lemma}
\begin{proof}
\noindent Отметим, что в $U(F/N)$, $U(F/N_m)$ нет делителей нуля.
Так как $F/N_m$ --- разрешимая алгебра, то в $U(F/N_m)$ выполняется правое условие Оре (доказательство см., например, в \cite{Hm}).
Обозначим через $\delta_t$ идеал, порожденный
$\{\phi(N_{i_1})\cdots \phi(N_{i_c})|i_1 + \cdots + i_c\geqslant t\}$ в $U(N/N_m)$,
через $S$ --- систему представителей алгебры $U(F/N_m)$ по подалгебре $U(N/N_m)$ такую, что $S\cap (N/N_m)_U =\emptyset$.\\
Тогда $\Delta_t=S\delta_t$ и если $\alpha\in S\delta_i\setminus S\delta_{i+1}$, $\beta\in S\delta_j\setminus S\delta_{j+1}$,
то $\alpha\beta\in S\delta_{i+j}\setminus S\delta_{i+j+1}$.\\
Отсюда вытекает, что $\psi(\alpha\beta)=\psi(\alpha)+\psi(\beta)$. Ясно, что $\psi(\alpha+\beta)\geqslant\min\{\psi(\alpha),\psi(\beta)\}$, т.е. функция $\psi$ является нормированием на $U(F/N_m)$.\\
По условию, $\phi^\prime(a_{kk})\neq 0$, а если $n<k$, то $\phi^\prime(a_{kn})= 0$,
поэтому $\psi(a_{kk})=0$, а если $n<k$, то $\psi(a_{kn})>0$, $k=1,\ldots,r$.
Обозначим $i$-ю строку матрицы $\|a_{kn}\|$ через $v_i=(a_{i1},\ldots,a_{is})$.
Нужную нам матрицу $\|b_{kn}\|$ будем строить индукцией.
Полагаем $b_{1j}=a_{1j}$, $j=1,\ldots,s$, $\bar{v}_1=(b_{11},\ldots,b_{1s})$.
Ясно, что $\psi(b_{11})\leqslant \psi(b_{1j})$, $j=1,\ldots,s$.
Пусть последовательностью элементарных преобразований {\rm (\ref{gr2_df3})}, {\rm (\ref{gr2_df4})}
строк $v_1,\ldots,v_{t-1}$ матрицы $\|a_{kn}\|$
построены строки $\bar{v}_1,\ldots,\bar{v}_{t-1}$ матрицы $\|b_{kn}\|$
такие, что при $n<k$ $b_{kn}= 0$ и $\psi(b_{kk})\leqslant \psi(b_{kn})$, $n=1,\ldots,s$,
$a_{t1}=\ldots=a_{tl}=0$.\\
Если $l=t-1$,
то полагаем $b_{tj}=a_{tj}$, $j=1,\ldots,s$, $\bar{v}_t=(b_{t1},\ldots,b_{ts})$.
Ясно, что $\psi(b_{tt})\leqslant \psi(b_{tj})$, $j=1,\ldots,s$.
Рассмотрим случай $l<t-1$, $a_{t,l+1}\neq 0$, $\phi^\prime(a_{t,l+1})= 0$. Выберем ненулевые $\beta_1,\,\beta_2\in U(F/N_m)$ так, чтобы было $b_{l+1,l+1}\beta_1=-a_{t,l+1}\beta_2$.
Тогда $\bar{v}_{l+1}\beta_1+v_t\beta_2=(c_{t1},\ldots,c_{tr})$, $c_{t1}=\ldots=c_{t,l+1}= 0$.
Из $\psi(b_{l+1,j}\beta_1)\geqslant \psi(b_{l+1,l+1}\beta_1)= \psi(a_{t,l+1}\beta_2) >\psi(\beta_2)$,
$\psi(a_{tt}\beta_2)=\psi(\beta_2)$ следует, что $\psi(c_{tt})=\psi(\beta_2)$,
$\psi(c_{tt})\leqslant \psi(c_{tj})$, $j=1,\ldots,s$. Отметим, что если $j<t$, то $\psi(c_{tt})< \psi(c_{tj})$.
Продолжая аналогичные рассуждения, построим из строки $v_t$ последовательностью элементарных преобразований {\rm (\ref{gr2_df3})}, {\rm (\ref{gr2_df4})} строку $\bar{v}_t=(b_{t1},\ldots,b_{ts})$ такую, что $b_{t1}=\ldots=b_{t,t-1}=0$ и $\psi(b_{tt})\leqslant \psi(b_{tj})$, $j=1,\ldots,s$. Теперь соображения индукции заканчивают доказательство.
\end{proof}
\noindent В дополнение к лемме \ref{lm4_2_gr_1} отметим, что преобразованиями {\rm (\ref{gr2_df1})}, {\rm (\ref{gr2_df2})},
{\rm (\ref{gr2_df3})}, {\rm (\ref{gr2_df4})} произвольную матрицу $\|a_{kn}\|$ над $U(F/N_m)$ можно привести к треугольной матрице $\|b_{kn}\|$ такой, что $\psi(b_{kk})\leqslant \psi(b_{kn})$.\\
Действительно, элементарными преобразованиями
можно добиться, чтобы на месте $(1,1)$ был элемент $a_{ij}$ такой, что $\psi(a_{ij})=M$,
где $M$ --- минимальное значение, принимаемое функцией $\psi$ на элементах $a_{kn}$,
а затем с его помощью получить нули в первом столбце нашей матрицы.
Продолжая этот процесс применительно к строкам и столбцам у которых номера больше единицы,
мы дойдем в конце концов до треугольной матрицы $\|b_{kn}\|$ такой, что $\psi(b_{kk})\leqslant \psi(b_{kn})$.
\begin{lemma}\label{lm4_2_gr_2}
Пусть $G$ --- разрешимая алгебра Ли; $M$  --- матрица над
$U(G)$, $\alpha_i$ --- $i$-я строка матрицы $M$ ($i=1,\ldots,t$);
$\psi$ --- элементарное преобразование матрицы $M$; $M^\psi$  --- матрица, полученная из $M$ преобразованием
$\psi$; $\alpha_1^\psi,\ldots,\alpha_t^\psi$ --- строки матрицы $M^\psi$;
$\alpha$ --- правая линейная комбинация строк $\alpha_1,\ldots,\alpha_t$.
Если $\psi$ --- преобразование строк матрицы $M$, то полагаем $\alpha^\psi=\alpha$;
если $\psi$ --- перестановка столбцов $i$ и $j$ матрицы $M$, то полагаем $\alpha^\psi$ --- строка,
полученная из $\alpha$ перестановкой $i$-й и $j$-й координат.
Тогда найдется ненулевой элемент $d\in U(G)$ такой, что $\alpha^\psi d$  --- правая линейная комбинация строк $\alpha_1^\psi,\ldots,\alpha_t^\psi$.
\end{lemma}
\begin{proof}
Ясно, что можем ограничиться рассмотрением случая, когда
$M^\psi$ получена из $M$ умножением справа i-й строки матрицы $M$ на ненулевой элемент $a\in U(G)$.
По условию, найдутся элементы $b_1,\ldots,b_t$ из $U(G)$ такие, что
$\alpha_1b_1+\ldots+\alpha_tb_t=\alpha$. Если $b_i= 0$, то берем $d=1$.
Предположим $b_i\neq 0$.  Так как $G$ --- разрешимая алгебра Ли, то в $U(G)$ выполняется правое условие Оре (доказательство см., например, в \cite{Hm}). Поэтому найдутся ненулевые $c$, $d\in U(G)$ такие, что $ac=b_id$
и мы получим $\alpha_1b_1d+\ldots+\alpha_i ac+\ldots+\alpha_tb_td=\alpha\, d$.
\end{proof}

\noindent Доказательство теоремы \ref{tm3_algr}.
Если $n-m=1$, $H=A_i$ ($i$ --- произвольно выбранный элемент из \{1,\ldots,n\}), то $H\cap (R+N_{kl}) = H\cap N_{kl}=0$, где
$N_{kl}$ --- произвольный член ряда {\rm (\ref{end_algr_3})}.
Поэтому будем предполагать, что $n-m>1$.\\
Естественные гомоморфизмы $U(F)\to U(F/(R+N_{k,m_k+1}))$ и $U(F/(R+N_{k,m_k+1}))\to U(F/(R+N_{k1}))$
обозначим через $\phi_k$ и $\phi^\prime_k$ соответственно.
Идеалы, порожденные $\{\phi_k(R+N_{ki_1})\cdots \phi_k(R+N_{ki_j})|i_1 + \cdots + i_j\geqslant t\}$ в $U(F/(R+N_{k,m_k+1}))$
и $U((R+N_{k1})/(R+N_{k,m_k+1}))$ обозначим через $\Delta_{kt}$ и $\Delta_{kt}^\prime$
соответственно. Пусть $\psi_k$ --- функция на $U(F/(R+N_{k,m_k+1}))$ такая, что
$\psi_k(0)=\infty$; если $\phi^\prime_k(\alpha)\neq 0$, то $\psi_k(\alpha)=0$;
если $\alpha\in \Delta_{kj}\setminus \Delta_{k,j+1}$, то $\psi_k(\alpha)=j$.
Естественный гомоморфизм $U(F)\to U(F/N)$ обозначим через $\phi_0$.
Пусть $\psi_0$ --- функция на $U(F/N)$ такая, что
$\psi_0(0)=\infty$; если $\alpha\neq 0$, то $\psi_0(\alpha)=0$.\\
При доказательстве леммы \ref{lm4_2_gr_1} было показано, что функция $\psi$ является нормированием на $U(F/N_m)$. Аналогично доказывается, что функция $\psi_k$ является нормированием на $U(F/(R+N_{k,m_k+1}))$,
т.е. если $\alpha,\beta\in U(F/(R+N_{k,m_k+1}))$, то $\psi_k(\alpha\beta)=\psi_k(\alpha)+\psi_k(\beta)$, $\psi_k(\alpha+\beta)\geqslant\min\{\psi_k(\alpha),\psi_k(\beta)\}$.
Ясно, что функция $\psi_0$ является нормированием на $U(F/N)$.\\
Пусть $D_1,\ldots,D_n$ --- производные Фокса алгебры $U(F)$.\\
Обозначим $D_j(r_i)$ через $m_{ij}$, через $M$ --- матрицу $\|m_{ij}\|$, через $M^{\phi_k}$ --- матрицу $\|\phi_k(m_{ij})\|$, через $t_k$ --- ранг $M^{\phi_k}$.\\
Если $r_1,\ldots,r_m$ --- элементы из $N_{s,m_s+1}$, то $R+N_{kl} = N_{kl}$, где $N_{kl}$ --- произвольный член ряда {\rm (\ref{end_algr_3})}, поэтому будем считать, что в $\{0,\ldots,s\}$ найдется $K$ такое, что $t_K>0$ и $t_i=0$ при $i<K$.
Элементарными преобразованиями  приведем матрицу $M^{\phi_K}$ к треугольной матрице $\|m^{(K)}_{ij}\|$ такой, что $\psi_K(m^{(K)}_{ii})\leqslant \psi_K(m^{(K)}_{ij})$.\\
Обозначим через $M_K$ матрицу $\|m^{(K)}_{ij}\|$, через $\Phi_K$
последовательность элементарных преобразований матрицы $M$ такую, что $(\Phi_K(M))^{\phi_K}=M_K$.\\
Пусть $K<k\leqslant s$; $\Phi_{k-1}$ --- последовательность элементарных преобразований матрицы $M$; $(\Phi_{k-1}(M))^{\phi_{k-1}}=M_{k-1}=\|m^{(k-1)}_{ij}\|$ --- треугольная матрица
такая, что $\psi_{k-1}(m^{(k-1)}_{ii})\leqslant \psi_{k-1}(m^{(k-1)}_{ij})$.
Полагаем $M_{k1}=\Phi_{k-1}(M)$.\\
Так как $(M_{k1}^{\phi_k})^{\phi^\prime_k}=M_{k-1}$, то ввиду леммы~\ref{lm4_2_gr_1} найдется такая последовательность $\Phi_{k1}$ элементарных преобразований (\ref{gr2_df3}), (\ref{gr2_df4})
первых $t_{k-1}$ строк матрицы $M_{k1}$, что элементы матрицы $(\Phi_{k1}(M_{k1}))^{\phi_k}=\|b_{ij}\|$ будут
удовлетворять условиям $b_{ij}=0$ при $j< i$ и $\psi_k(b_{ii})\leqslant \psi_k(b_{ij})$, $i=1,\ldots,t_{k-1}$.
Полагаем $M_{k2}=\Phi_{k1}(M_{k1})$.
Последовательностью $\Phi_{k2}$ элементарных преобразований (\ref{gr2_df3}), (\ref{gr2_df4}) с помощью $1\text{-й},\ldots,t_{k-1}\text{-й}$ строк матрицы $M_{k2}$ добьемся того, что в $1\text{-м},\ldots,t_{k-1}\text{-м}$ столбцах матрицы $(\Phi_{k2}(M_{k2}))^{\phi_k}$ под диагональю будут нули.
Полагаем $M_{k3}=\Phi_{k2}(M_{k2})$.
Последовательностью $\Phi_{k3}$ элементарных преобразований строк и столбцов матрицы $M_{k3}$, номера которых больше $t_{k-1}$, добьемся того, что $(\Phi_{k3}(M_{k3}))^{\phi_k}=\|m^{(k)}_{ij}\|$ будет треугольной  матрицей и $\psi_k(m^{(k)}_{ii})\leqslant \psi_k(m^{(k)}_{ij})$.
Обозначим через $M_k$ матрицу $\|m^{(k)}_{ij}\|$, через $\Phi_k$
последовательность$\Phi_{k-1}$, $\Phi_{k1}$, $\Phi_{k2}$, $\Phi_{k3}$.
Из индуктивных соображений можно считать, что построены $M_k$, $\Phi_k$, $k=K,\ldots,s$.
Полагаем $I_s$ --- множество $i_1,\ldots,i_{t_s}$ номеров столбцов $m_{i_1},\ldots,m_{i_{t_s}}$ матрицы $M$ таких, что $\Phi_s(m_{i_j})$ --- $j$-й столбец матрицы $\Phi_s(M)$; $\{j_1,\ldots,j_p\}= \{1,\ldots,n\}\setminus I_s$;
$H$ --- свободная сумма алгебр $A_i,~i\in \{j_1,\ldots,j_p\}$.
Так как $t_s\leqslant m$, то $p\geqslant n-m$. Пусть $\Phi$ --- последовательность элементарных преобразований матрицы $M$, $\varphi\in\Phi$, $v_1,\ldots,v_n\in U(F)$.
Будем считать, что если $\varphi$ --- преобразование строк матрицы $M$, то $\varphi$ действуют на строку $(v_1,\ldots,v_n)$ тождественно, а если $\varphi$ --- перестановка столбцов $i$ и $j$ матрицы $M$, то $\varphi$ действуют на строку $(v_1,\ldots,v_n)$
перестановкой элементов $v_i$ и $v_j$.\\
Обозначим через $B$ алгебру $R+N_{k1}$, через $U_0(B)$ --- идеал, порожденный $B$ в $U(B)$,
через $\{x_{kz}\}$ --- базу алгебры $B$,
через $\{\partial_{kz}\}$ --- соответствующие этой базе производные Фокса алгебры $U(B)$ $(z=1,\ldots,l,\ldots)$.
Выберем $v\in H\cap (R+N_{k,m_k+1})$.
Найдутся $u\in N_{k,m_k+1}$, $\beta_1,\ldots,\beta_m\in U(F)$,
такие, что
\begin{eqnarray}\label{tm2_3_gr_end}
D_j(v)\equiv \sum_{i={1}}^{m} D_j(r_i)\beta_i+D_j(u)\mod{(R+N_{k,m_k+1})_U}\,,~j= 1,\ldots,n.
\end{eqnarray}
Так как $u\in (N_{k1})_{(m_k+1)}$, то $\partial_{kz}(u)\in U_0(B)^{m_k}$.
Из $D_j(u)= \sum_z D_j(x_{kz})\partial_{kz}(u)$ следует $D_j(u)\in U(F)U_0(B)^{m_k},~j=1,\ldots,n$. Покажем, что
\begin{eqnarray}\label{tm2_6_gr_end}
\phi_k(D_j(v))\in \Delta_{km_k},~j=1,\ldots,n.
\end{eqnarray}
Если $t_k=0$, то $D_j(r_i)\equiv 0\mod{(R+N_{k,m_k+1})_U}$, поэтому (\ref{tm2_6_gr_end}) вытекает из (\ref{tm2_3_gr_end}).
Пусть $t_k>0$, $V=(D_1(v)-D_1(u),\ldots,D_n(v)-D_n(u))$, $\overline{V}=(-D_1(u),\ldots,-D_n(u))$.
Отметим, что элементы строки $\overline{V}\,^{\phi_k}$ лежат в $\Delta_{km_k}$.
Ввиду леммы~\ref{lm4_2_gr_2} и (\ref{tm2_3_gr_end}) найдется элемент $d\in U(F)$ такой, что $\phi_k(d)\neq 0$ и строка $(\Phi_k(V d))^{\phi_k}$
линейно выражается через строки $1,\ldots,t_k$ матрицы $M_k$.\\
Обозначим через $\gamma_1,\ldots,\gamma_{t_k}$ элементы $U(F)$ такие, что $\phi_k(\gamma_i)$ --- коэффициент при
$i$-й строке матрицы $M_k$ в записи $(\Phi_k(V d))^{\phi_k}$ через строки  $1,\ldots,t_k$ матрицы $M_k$.
Первые $t_k$ элементов строки $(\Phi_k(V d))^{\phi_k}$ лежат в $\Delta_{km_k} \phi_k(d)$, так как они соответственно равны первым $t_k$ элементам строки $(\Phi_k(\overline{V} d))^{\phi_k}$.
Так как $\psi_k(m^{(k)}_{ii})\leqslant \psi_k(m^{(k)}_{ij})$, то все элементы строки $(\Phi_k(V d))^{\phi_k}$  лежат в $\Delta_{km_k}\phi_k(d)$, т.е. формула (\ref{tm2_6_gr_end}) справедлива.
(Вначале замечаем, что $m^{(k)}_{11}\phi_k(\gamma_1)\in\Delta_{km_k} \phi_k(d)$,
откуда $m^{(k)}_{1j}\phi_k(\gamma_1)\in\Delta_{km_k} \phi_k(d),~j= 2,\ldots,n$. Тогда $m^{(k)}_{22}\phi_k(\gamma_2)\in\Delta_{km_k} \phi_k(d)$,
откуда $m^{(k)}_{2j}\phi_k(\gamma_2)\in\Delta_{km_k} \phi_k(d),~j= 3,\ldots,n$ и т.д.)\\
Ясно, что $H\cap (R+N_{11}) = H\cap N_{11}$. Предположим, для всех $N_{ij}$ от $N_{11}$ до $N_{kl}$ включительно $H\cap (R+N_{ij}) = H\cap N_{ij}$, $l\leqslant m_k$.
Возьмем $v\in H\cap (R + N_{k,l+1})$.\\
Рассмотрим случай $l=1$.
Будем иметь $H\cap (R+N_{k1})=H\cap N_{k1}$ и $v\in H\cap (R + N_{k2})$.
Найдутся $u\in N_{k2}$, $\beta_1,\ldots,\beta_m\in U(F)$, такие, что
\begin{eqnarray*}
D_j(v)\equiv \sum_{i={1}}^{m} D_j(r_i)\beta_i+D_j(u)\mod{(R+N_{k1})_U}\,,~j= 1,\ldots,n.
\end{eqnarray*}
Пусть $V=(D_1(v)-D_1(u),\ldots,D_n(v)-D_n(u))$.
Ввиду леммы~\ref{lm4_2_gr_2} найдется элемент $d\in U(F)$ такой, что $\phi_{k-1}(d)\neq 0$ и строка
$(\Phi_{k-1}(V d))^{\phi_{k-1}}$
линейно выражается через строки треугольной матрицы $M_{k-1}$ ранга $t_{k-1}$.
Так как $u\in N_{k2}$, то $\phi_{k-1}((D_j(u))=0$ $(j=1,\ldots,n)$,
поэтому в строке $(\Phi_{k-1}(V d))^{\phi_{k-1}}$ первые $t_{k-1}$ элементов --- нули, следовательно,
строка $(\Phi_{k-1}(V d))^{\phi_{k-1}}$ --- нулевая.\\
Тогда $D_j(v)\equiv 0 \mod{(N_{k1})_U}$ $(j=1,\ldots,n)$,
откуда $v\in N_{k2}$ \cite{Shm}.\\
Рассмотрим случай $l>1$. Обозначим через $S=S_\alpha\cup S_\beta$ систему представителей алгебры $U(F)$ по идеалу $B_U$.
Если $z$ --- элемент базы алгебры $H\cap B=H\cap N_{k1}$, то найдутся $x\in \{1,\ldots,p\}$, $g_x\in {\bf N}$ такие, что
$D_{j_x}(z) \equiv \sum_{i=1}^{g_x} \lambda_i t_i\mod{B_U}$, где $0\neq \lambda_i\in P$, $t_i \in S_\alpha$.
Элементу $z$ поставим в соответствие строку $(M(z),j_x)$, где $M(z)$ --- произвольный элемент из $\{t_1,\ldots,t_{g_x}\}$.
Пусть $z_1,\ldots,z_q$ --- попарно различные элементы базы алгебры $H\cap N_{k1}$ такие, что $v$ принадлежит алгебре, порожденной этими элементами, $(M(z_i),q_i)$ --- строка, поставленная в соответствие элементу $z_i$ описанным выше способом $(q_i\in \{j_1,\ldots,j_p\})$.
Если в $z_2,\ldots,z_p$ найдется элемент $z_i$, которому может быть поставлена в соответствие строка равная $(M(z_1),q_1)$, то
выберем $\gamma\in P$ так, чтобы элементу $z_i^\prime=z_i-\gamma z_1$ нельзя было поставить в соответствие строку равную $(M(z_1),q_1)$.
Заменим $z_i$ на $z_i^\prime=z_i-\gamma z_1$ и элементу $z_i^\prime$ поставим в соответствие строку $(M(z_i^\prime),q_i^\prime)$.
Продолжая аналогичные рассуждения, найдем такую базу $X$ алгебры $H\cap N_{k1}$ и такие попарно различные элементы $x_1,\ldots,x_q$ из $X$, что $v$ принадлежит алгебре, порожденной $x_1,\ldots,x_q$, каждому $x_i$ поставлена в соответствие строка $(M(x_i),q_i)$ и строку $(M(x_i),q_i)$ нельзя поставить в соответствие $x_t$, $t\neq i$.\\
Обозначим через $\partial_1,\ldots,\partial_q$ производные Фокса, соответствующие $x_1,\ldots,x_q$.\\
Положим $H_t=H\cap (R+N_{kt})$. Обозначим через $U_0(H_1)$ --- идеал, порожденный $H_1$ в $U(H_1)$,
через $\Delta_t$ --- идеал, порожденный $\{(R+N_{ki_1})\cdots (R+N_{ki_s})|i_1 + \cdots + i_s\geqslant t\}$ в $U(R+N_{k1})$
через $\Delta_t^\prime$ --- идеал, порожденный $\{H_{i_1}\cdots H_{i_s}|i_1 + \cdots + i_s\geqslant t\}$ в $U(H_1)$.\\
Если $h\in U(H_1)$ и $\phi_k(h)\in \Delta_{k2}^\prime$, то $h\in U(H_1)\cap \Delta_2\mod{(R+N_{k,m_k+1})_U}$.\\
Поэтому $h\in \Delta_2^\prime \mod{(R+N_{k2})_U}$ (ввиду $(R+N_{k,m_k+1})_U\subseteq (R+N_{k2})_U$).
Из $H_2=H\cap (R+N_{k2})=H\cap N_{k2}=(H\cap N_{k1})_{(2)}\subseteq U_0(H_1)^2$ следует, что $\Delta_2^\prime \subseteq U_0(H_1)^2$
и $h\in U_0(H_1)^2$.
Если  $x$ --- элемент базы алгебры $H_1$, то $x\notin U_0(H_1)^2$, следовательно $\phi_k(x)\notin \Delta_{k2}^\prime$.
Покажем, что
\begin{eqnarray}\label{tm2_6_gr_end_0}
\phi_k(\partial_z(v))\in \Delta_{kl}^\prime,~z=1,\ldots,q.
\end{eqnarray}
Предположим, формула (\ref{tm2_6_gr_end_0}) неверна.\\
Выберем $i$ такое, что $\phi_k(\partial_i(v))\in \Delta_{k,l-1}^\prime\setminus \Delta_{kl}^\prime$.
Обозначим через $\bar{v}$ элемент $[v,x_t]$ $(t\neq i)$.
Так как $\bar{v}\in (H\cap N_{k1})_{(l+1)}$ (ввиду $v\in H\cap N_{kl}=(H\cap N_{k1})_{(l)}$), а $x_t$ --- элемент базы алгебры $H\cap N_{k1}$, то $\partial_z(\bar{v})\in U_0(H_1)^{l}$, $z=1,\ldots,q$,
$\phi_k(x_t)\in \Delta_{k1}^\prime\setminus \Delta_{k2}^\prime$.\\
Из $\phi_k(\partial_i(\bar{v}))=\phi_k(\partial_i(v)x_t)$ следует, что $\phi_k(\partial_i(\bar{v}))\in \Delta_{kl}^\prime\setminus \Delta_{k,l+1}^\prime$.\\
Ввиду $D_{q_i}(\bar{v})= \sum_{z=1}^q D_{q_i}(x_z)\partial_z(\bar{v})$ справедлива формула
\begin{eqnarray*}
D_{q_i}(\bar{v}) = \alpha\cdot M(x_i)\partial_i(\bar{v})+\sum_{c=1}^g t_c\lambda_c+\sum_{d={1}}^{h} \mu_d g_d,
\end{eqnarray*}
где $0\neq \alpha\in P$; $t_c,\mu_d\in S$, $t_c\neq M(x_i)$; $\lambda_c\in U(B)$,
$g_d\in U_0(B)^{l+1}$.\\
Поэтому $\phi_k(D_{q_i}(\bar{v}))\not\in \Delta_{k,l+1}$ $(\bar{v}\in H\cap (R+N_{k,l+2})$.\\
Аналогичными рассуждениями найдем такой элемент $w\in H\cap (R+N_{k,m_k+1})$, что
$\phi_k(D_{q_i}(w))\not\in \Delta_{km_k}$ --- противоречие с (\ref{tm2_6_gr_end}). Следовательно, формула (\ref{tm2_6_gr_end_0}) верна.\\
Так как $U(H_1)\cap \Delta_l\equiv \Delta_l^\prime\mod{(R+N_{kl})_U}$ и $(R+N_{k,m_k+1})_U\subseteq(R+N_{kl})_U$, то ввиду (\ref{tm2_6_gr_end_0})
$\partial_z(v)\in \Delta_l^\prime\mod{(R+N_{kl})_U}$.\\
Ввиду $H\cap (R+N_{kl})=H\cap N_{kl}=(H\cap N_{k1})_{(l)}\subseteq U_0(H_1)^l$ получаем, что $\partial_z(v)\in \Delta_l^\prime$.
Если $t\leqslant l$, то $H_t=H\cap N_{kt} = (H_1)_{(t)}\subseteq U_0(H_1)^t$, поэтому
$\Delta_l^\prime\subseteq U_0(H_1)^l$, т.е. $\partial_z(v)\in U_0(H_1)^l$, $z=1,\ldots,q$.
Это означает, что $v\in U_0(H_1)^{l+1}$, следовательно $v\in (H\cap N_{k1})_{(l+1)}$.
Теперь соображения индукции заканчивают доказательство.


\begin{thebibliography}{99}

\bibitem{Sh} А.И.Ширшов, Некоторые алгоритмические проблемы для алгебр Ли, Сиб. мат. журн., 3, N~2 (1962), 292--296.

\bibitem{Tl_0} В.В.Талапов, О разрешимых алгебрах Ли с одним определяющим соотношением, Сиб. мат. журн., 22, N~4 (1981), 176--181.

\bibitem{Tl} В.В.Талапов, О полинильпотентных алгебрах Ли, заданных одним определяющим соотношением, Сиб. мат. журн., 23, N~5 (1982), 192--204.

\bibitem{Hm} О.Г.Харлампович, Условие Линдона для разрешимых алгебр Ли, Изв. вузов. Матем., N~9 (1984), 50-59.

\bibitem{Sh2} Г.П.Кукин, О свободной лиевой сумме алгебр Ли с объединением, Алгебра и логика, 11, N~1 (1972), 59-86.

\bibitem{Um} У.У.Умирбаев, Частные производные и эндоморфизмы некоторых относительно свободных алгебр Ли, Сиб. мат. журн., 34, N~6 (1993), 179--188.

\bibitem{Shm} А.Л.Шмелькин, А.В.Сырцов, О вложениях некоторых фактор-алгебр свободной суммы алгебр Ли, Фундамент. и прикл. матем., 10, N~4 (2004), 235--241.

\end{thebibliography}
\end{document}